\documentclass[11pt]{amsart}
\usepackage[utf8x]{inputenc}
\usepackage{amsfonts}
\usepackage{amssymb}
\usepackage{amsmath}
\usepackage{delarray}
\usepackage{graphicx}
\usepackage[all]{xy}
%\usepackage{color}
%\input colordvi
%\usepackage{amsthm}
% ----------------------------------------------------------------
\vfuzz2pt % Don't report over-full v-boxes if over-edge is small
\hfuzz2pt % Don't report over-full h-boxes if over-edge is small
% THEOREMS -------------------------------------------------------
\newtheorem{thm}{Theorem}[section]

\newtheorem{prop}[thm]{Proposition}

\newtheorem{defn}[thm]{Definition}
\theoremstyle{remark}
\newtheorem{remark}[thm]{Remark}
\numberwithin{equation}{section}

% MATH -----------------------------------------------------------

\newcommand{\cK}{\mathcal K}
\newcommand{\cN}{\mathcal N}
\newcommand{\cF}{\mathcal F}

\newcommand{\cC}{\mathcal C}
\newcommand{\cB}{\mathcal B}

\newcommand{\cL}{\mathcal L}
\newcommand{\cO}{\mathcal O}

\newcommand{\cU}{\mathcal U}

\newcommand{\cS}{\mathcal S}

\newcommand{\bbR}{\mathbb R}
\newcommand{\bbT}{\mathbb T}
\newcommand{\bbC}{\mathbb C}

\newcommand{\bbN}{\mathbb N}

\begin{document}

%\newpage
%opening
\title[Geometry of integrable systems on 2D surfaces]{Geometry of integrable dynamical systems on 2-dimensional surfaces}

\author{Nguyen Tien Zung and Nguyen Van Minh}
\address{Institut de Mathématiques de Toulouse, UMR5219, Université Toulouse 3}
\email{tienzung.nguyen@math.univ-toulouse.fr, minh@math.univ-toulouse.fr}

\date{1st version, 06/April/2012}
\subjclass{58K50, 37J35,  58K45, 37J15}
\keywords{integrable system, normal form, monodromy, periods, hamiltonianzation, classification, 
nondegenerate singularity, nilpotent singularity}%

\begin{abstract} 
This paper is devoted to the problem of classification, up to smooth isomorphisms or up to orbital equivalence, 
of smooth integrable vector fields on  2-dimensional surfaces,  under some nondegeneracy conditions. The main continuous
invariants involved in this classification are the left equivalence classes of period
or monodromy functions, and the cohomology classes of period
cocycles, which can be expressed in terms of Puiseux series. 
We also study the problem of Hamiltonianization of these integrable vector fields by a compatible symplectic
or Poisson structure.
\end{abstract}

\maketitle

{\small \tableofcontents }

\section{Introduction and preliminaries}

The aim of this paper, which is part of our program of systematic study of the geometry and topology 
of integrable non-Hamiltonian  dynamical systems \cite{AyoulZung_Galois2010,Zung-Convergence2002,Zung-Toric2003,
Zung-Nondegenerate2012,Zung-SmoothLinearzation2012,ZungMinh-Action2012}, 
is to describe the local and global invariants and classification of smooth vector fields on 2-dimensional
surfaces, which admit a non-trivial first integral. 
%In the language of integrable dynamical systems 
%(see, e.g. \cite{Zung-Convergence2002}, \cite{Zung-Nondegenerate2012}) 
Such a vector field, together with a first integral, is called an {\bf integrable dynamical system of type (1,1)} 
(i.e. 1 vector field and 1 function). 
A special case of systems of type (1,1) is Hamiltonian systems on symplectic surfaces, 
where the Hamiltonian function itself is a first integral of the Hamiltonian vector field.
Invariants of Hamiltonian systems on surfaces have been studied by many people, in particular Fomenko \cite{Fomenko-Hamiltonian1987}
who introduced the notion of ``atoms'' and ``molecules'' for semi-local and global topological classification of these systems and systems
with $1\frac{1}{2}$ degrees of freedom, and Dufour - Molino -Toulet \cite{DufourMolino-Class2dim1994} 
who gave a symplectic classification in terms of Taylor series of regularized  action functions. We will extend the known ideas and results 
in the Hamiltonian case to the general non-Hamiltonian case.

We will denote an integrable system of type (1,1) by a couple $(X,F)$ or $(X,\cF)$, where $X$ is a vector field on a 2-dimensional surface 
$\Sigma$ such that $X \neq 0$ almost everywhere, $F$ is a first integral of $X$ (i.e. $X(F)=0$) such that $dF \neq 0$ almost everywhere,
and $\cF$ is the ring of all first integrals of $X$. The functional dimension of $\cF$ is 1, i.e.  $dF\wedge dG = 0$  for any $F, G \in \cF.$ 
The main object of our study is $X$ and not $F$ : $X$ is fixed while $F$ can be replaced by any other appropriate first integral.
A point $p \in \Sigma$ is called {\bf singular} if it is singular with respect to $X$, i.e. $X(p) = 0$.
The couple  $(X,\cF)$ gives rise to a singular 1-dimensional {\bf associated singular fibration} on the ambient surface $\Sigma$:
each fiber is a maximal connected subset of  $\Sigma$ on which every first integral $F \in \cF$ 
is constant.  Each fiber of this fibration is called a {\bf level set} of $(X,\cF)$. 
Notice that the level sets are invariant under the flow of $X$. 
A level set $\cN$ is called {\bf regular} if there is a first integral $F \in \cF$ such that $F$ is regular on $\cN$, i.e. $dF \neq 0$ everywhere on $\cN$.
Remark that a regular level set may contain singular points of $X$. 

We will study integrable systems $(X,\cF)$ locally, i.e. near a point, 
semi-locally, i.e. in the neighborhood of a level set, and globally, i.e. on the whole surface. 
We will describe local and semi-local invariants of these systems, 
which allow us to classify them up to smooth isomorphisms or smooth orbital equivalence, in the following sense:

\begin{defn}
Two smooth integrable systems $(X_1,F_1)$ and $(X_2,F_2)$ on two surfaces $\Sigma_1$ and $\Sigma_2$ 
respectively are called smoothly {\bf orbitally equivalent} if there is a smooth diffeomorphism $\Phi : \Sigma_1 \to \Sigma_2$  
such that $\Phi_*X_1 \wedge X_2 = 0$ and the set of singular points of $X_2$ (where $X_2=0$) 
coincides with the set of singular points of $\Phi_*X_1$.
They are called smoothly {\bf isomorphic} if $\Phi_*X_1 = X_2$. 
\end{defn}

In the above definition, one may replace the smooth ($C^\infty$) category by some other category, e.g. $C^1, C^k \ (1 \leq k < \infty)$,
$C^\omega$ (analytic). We will work mainly in the smooth and the real analytic categories. In the literature, there are also vector fields
which are integrable in a weaker sense: their first integrals are only smooth outside a small set, see e.g. \cite{Llibre-IntegrablePlanar2011}. Here
we require the first integral to be smooth everywhere. 
Moreover, we will restrict our attention to smooth integrable  systems $(X,\cF)$ which are \emph{weakly nondegenerate} in the sense of Definition 
\ref{defn:weakly_nondegenerate} below. It means that each singular point of $(X,\cF)$ is either \emph{nondegenerate}
(in the sense of \cite{Zung-Nondegenerate2012,Zung-SmoothLinearzation2012}), 
or generic nilpotent (see below). Nondegenerate singular points can be classified into 3 types depending on the eigenvalues of $X$ at them,
so in total we allow the following 4 types of singular points in our systems:

\begin{enumerate}
\item Type I. Elliptic: two purely imaginary eigenvalues.

\item Type II. Hyperbolic with  eigenvalue 0:  two different real eigenvalues, one of which is 0.

\item Type III. Hyperbolic without  eigenvalue 0: two non-zero real eigenvalues.

\item Type IV. Generic nilpotent: the linear part of $X$ is nilpotent non-zero, the quadratic part of $X$ is generic
 and there is a local first integral $F$  with $dF \neq 0$ at the singular point. (See Definition \ref{defn:Generic_Nilpotent}).
\end{enumerate}

These singularities  admit nice local normalizations which linearize the associated singular fibration.
For the nondegenerate singularities (Types I, II, III) this fact was shown in \cite{Zung-Nondegenerate2012,Zung-SmoothLinearzation2012},
and for nilpotent  singularities (Type IV) it follows from the definition and Takens-Gong normal form \cite{Takens-NF1974,Gong-NF1995}. 
Moreover, these 4 types of singularities are
locally structurally stable, i.e.  any $C^1$-close integrable system will have the same types of singular points.

\begin{defn} \label{defn:weakly_nondegenerate}
 A smooth integrable system $(X,\cF)$ on a surface $\Sigma$ is called {\bf weakly nondegenerate} if every of its 
singular points is of one of the above 4 types I-IV.
\end{defn}

\begin{figure}[htb] 
\begin{center}
\includegraphics[width=120mm]{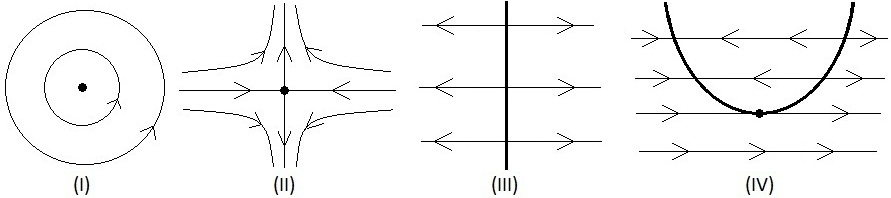}
\caption{Four types of allowed singularities.}
\label{fig:4types}
\end{center}
\end{figure}

We will usually denote by $\cB$ the  base space of the associated singular fibration of an intgerable system $(X,\cF)$: 
each point of $\cB$ corresponds to a level set of $(X,\cF)$. Then we have a natural projection
\begin{equation}
proj : \Sigma \to \cB .
\end{equation}
The topology and the differential structure of $\cB$ is induced from $\Sigma$ via the projection map. 
The induced differential structure was probably first studied by Reeb and Haefliger 
\cite{Reeb-Variete1957} for a similar situation. 
Each first integral $F \in \cF$ descends to a smooth function on $\cB$. 
Imitating Dufour - Molino - Toulet \cite{DufourMolino-Class2dim1994}, we will view
 $\cB$ as a graph and call it the {\bf Reeb graph} of $(X,\cF)$: each vertex of the Reeb graph corresponds to
a level set which contains at least one singular point of Type I, III or IV. 
Of course, the Reeb graph is an important orbital invariant of the system, 
and together with some other discrete invariants
it gives an orbital classification of the systems $(X,\cF)$ similar to a topological clasification 
obtained by Fomenko and his collaborators 
(see, e.g., \cite{BolsinovFomenko-IntegrableBook,Fomenko-Hamiltonian1987, Oshemkov-Morse1995}) 
for  integrable  Hamiltonian systems with 2 degrees of freedom. 

The organization of this paper is as follows: 
In Section 2 we will describe the local invariants of nondegenerate singular points, 
using smooth normal forms given by the geometric linearization. We obtain the local
classification of these singularities in terms of (left equivalence classes of) the local {\bf period functions}
or the {\bf frequency functions}. In Section 3 we give a semi-local classification of nondegenerate level sets
in terms of the {\bf monodromy fuctions} or cohomology classes of the {\bf  period cocycles}. In Section 4 we study generic nilpotent singularities,
where again we obtain a classification in terms of regularized monodromy functions. In Section 5, we put local and
semi-local invariants together on the Reeb graph to get a global classification of weakly-nondegenerate
integrable systems of type (1,1). Finally, in Section 6, we give necessary and sufficient conditions for an integrable
vector field in dimension 2 to be Hamiltonian with respect to some symplectic or Poisson structure.

As will be shown in the paper, our continuous semi-local invariants
are similar in nature and constructed in a similar way to the symplectic and flow invariants studied by
Dufour - Molino - Toulet \cite{DufourMolino-Class2dim1994}, Bolsinov \cite{Bolsinov-Trajectory1995} 
and Kruglikov \cite{Kruglikov-HamiltonianSurfaces1997,Kruglikov-VFSurfaces1999}
for Hamiltonian or isochore systems on 2-dimensional surfaces, and by Bolsinov, Vũ Ngọc San,  and Dullin 
\cite{Bolsinov-Trajectory1995, BolsinovSan-Symplectic2012,DullinSan_Symplectic2007,San_Focus2003}  
for higher-dimensional cases. Nevertheless, our 
invariants are different from and complementary to the invariants in 
\cite{Bolsinov-Trajectory1995, BolsinovSan-Symplectic2012, DufourMolino-Class2dim1994, 
DullinSan_Symplectic2007, Kruglikov-HamiltonianSurfaces1997,Kruglikov-VFSurfaces1999, San_Focus2003},
and instead of being expressed in terms of Taylor series as in the cited papers, they are expressed in terms of Puiseux series, 
due to the non-Hamiltonian nature of the studied systems.

\section{Local structure of nondegenerate singularities}
\subsection{Elliptic singularities (Type I)}

A singular point $p$ (i.e., a point where $X$ vanishes)
of an integrable system $(X,\cF)$ is called a nondegenerate {\bf elliptic} singular point, 
or also a singular point of {\bf Type I} in this paper, if the eigenvalues of $X$ at $p$ are pure imaginary non-zero (i.e.
the linear part of $X$ at $p$ generates a linear circle action on a plane), and there is a first integral $F \in \cF$
which is non-flat at $p$.

Suppose that $(X,\cF)$ is a smooth integrable system with a singular point $p$ of Type I. 
According to  the local geometric linearization theorem for nondegenerate singularities of integrable systems 
\cite{Zung-Nondegenerate2012,Zung-SmoothLinearzation2012}, there is a smooth coordinate system $(x,y)$
in a neighborhood of $p$, in which we have the following normal form:
\begin{equation} \label{eqn:Period1}
X = \frac{2\pi}{f(x^2+ y^2)}(x\frac{\partial}{\partial y} - y\frac{\partial}{\partial x})
\end{equation}
where $f$ is a smooth function such that $f(0)\neq 0$.

\begin{remark}
Elliptic singular points are also called {\bf centers} in the literature. 
There is a nice similar normal form result of Maksymenko \cite{Maksymenko-Symmetries2010}
without the assumption that there is a first integral, but under the assumption that the orbits are closed.  
\end{remark}

A local coordinate system in which $X$ has normal form like the above form will be 
called a {\bf canonical coordinate system}. The function $f$ in Formula \eqref{eqn:Period1}
is the local \emph{period function} of the singular point: the orbit through each
point $(x,y)$ near $p$ is closed and is of period $f(x^2+y^2)$.  The function $f(x^2+y^2)$ 
projects to a smooth function on the local base space $\cB_{\cU(p)} = proj (\cU(p)),$ 
where $\cU(p)$ denotes a saturated tubular neighborhood of $p$ and $proj$ 
is the projection map, which will be denoted by $f_\cB$ and also called the local {\bf period function}
(on the local base space). In local canonical coordinates, the function $R = x^2 + y^2$ descends to a function on the base
space   $\cB_{\cU(p)}$  and becomes a smooth coordinate function there:
any other smooth function on $\cB_{\cU(p)}$ (i.e. whose pull-back by $proj$ is smooth on $\cU(p)$) is a smooth
function of $R$.  In particular, the period function $f_\cB$ is a smooth function on the local base space
$\cB_{\cU(p)}$ and is a function of $R$: $f_\cB = f \circ R.$

It is clear that any singularity of Type I is locally orbitally equivalent to a standard linear center. 
The period function is a local invariant of the singular point, because any isomorphism will preserve
the periods. More precisely, we have:

\begin{prop} Two singularities of Type I of two smooth integrable systems $(X_1,\cF_1)$ and $(X_2,\cF_2)$
are locally isomorphic if and only if the two corresponding period functions $f_{\cB_1}$ and $f_{\cB_2}$ 
on the two associated local base spaces $\cB_{1}$ and $\cB_2$ are left-equivalent, i.e. there is a local
smooth diffeomorphism $\Phi$ from $\cB_1$ to $\cB_2$ such that $f_2 = f_1 \circ \Phi.$ In terms of local
canonical coordinates, two integrable vector fields
\begin{equation}
 X_1 = \frac{2\pi}{f_1(x_1^2+ y_1^2)}(x_1\frac{\partial}{\partial y_1} - y_1\frac{\partial}{\partial x_1})
\end{equation}
and
\begin{equation}
 X_2 = \frac{2\pi}{f_2(x_2^2+ y_2^2)} (x_2\frac{\partial}{\partial y_2} - y_2\frac{\partial}{\partial x_2})
\end{equation}
near two singular points of Type I are locally isomorphic if and only if there is a local smooth 
diffeomorphism $\phi$ from $(\bbR_{+}, 0)$ to itself such that $f_1  = f_2 \circ \phi.$
\end{prop}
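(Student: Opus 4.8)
The plan is to prove the two equivalent statements (the intrinsic one on base spaces and the coordinate version) together, since the coordinate version is just the intrinsic one read through the coordinate $R=x^2+y^2$ on each local base space. The implication ``isomorphic $\Rightarrow$ period functions left-equivalent'' is the easy direction: if $\Phi:\cU(p_1)\to\cU(p_2)$ is a local diffeomorphism with $\Phi_*X_1=X_2$, then $\Phi$ sends closed orbits of $X_1$ to closed orbits of $X_2$ and, since $\Phi$ conjugates the flows, it preserves their (minimal) periods. As $\Phi$ maps level sets to level sets it descends to a local diffeomorphism $\bar\Phi:\cB_1\to\cB_2$, and the period function on $\cB_1$ equals the pullback by $\bar\Phi$ of the period function on $\cB_2$, i.e. $f_{\cB_1}=f_{\cB_2}\circ\bar\Phi$. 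In canonical coordinates this reads $f_1(x_1^2+y_1^2)=f_2\bigl(\phi(x_1^2+y_1^2)\bigr)$ where $\phi:(\bbR_+,0)\to(\bbR_+,0)$ is the diffeomorphism induced by $\bar\Phi$ in the coordinate $R$; hence $f_1=f_2\circ\phi$.

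For the converse, suppose $f_1=f_2\circ\phi$ for a local diffeomorphism $\phi$ of $(\bbR_+,0)$. I want to build a diffeomorphism $\Phi$ between canonical neighborhoods with $\Phi_*X_1=X_2$. The natural guess is a map of the form $\Phi(x,y)=\bigl(\rho(R)\,(x,y)\bigr)$ in polar-type coordinates, i.e. $\Phi(r,\theta)=(\tilde r(r),\theta)$ with $\tilde r(r)^2=\phi(r^2)$; this is the unique rotation-equivariant map inducing $\phi$ on the base, and one checks it is a genuine smooth diffeomorphism near $0$ because $\phi$ is a diffeomorphism of $(\bbR_+,0)$ fixing $0$ (so $\phi(t)=t\cdot u(t)$ with $u(0)>0$, giving $\tilde r(r)=r\sqrt{u(r^2)}$, smooth and with nonzero derivative at $0$). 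Since both vector fields are $2\pi/f_i(R)$ times the standard rotational field $x\partial_y-y\partial_x$, and this rotational field is preserved by any rotation-equivariant diffeomorphism while $R$ transforms by $\phi$, one computes directly that $\Phi_*X_1$ is $2\pi/f_1(R)$ times the rotational field expressed in the new $R=\phi(\text{old }R)$, which is exactly $2\pi/(f_1\circ\phi^{-1})(R)=2\pi/f_2(R)$ times the rotational field, i.e. $\Phi_*X_1=X_2$.

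The main technical point — the ``hard part'', though it is not very hard — is the smoothness of $\Phi$ at the singular point itself. Away from $p$ the construction is obviously smooth, but at $R=0$ one must use that $\phi$, being a diffeomorphism germ of $\bbR$ fixing the origin, factors as $\phi(t)=t\,u(t)$ with $u$ smooth and $u(0)\neq0$, and then that the even function $r\mapsto\sqrt{u(r^2)}$ is smooth (a standard consequence of Whitney's lemma on even smooth functions, or simply of $u(0)>0$ and the smoothness of the square root away from $0$ combined with evenness). One also needs to note that a priori $\phi$ is only defined on $(\bbR_+,0)$, i.e. on the image of $R$; but any such germ extends to a smooth diffeomorphism germ of $(\bbR,0)$ with positive derivative at $0$, which is all that is used. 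Finally, one should remark that the choice of the positive branch of the square root is forced by orientation/continuity, so $\Phi$ is essentially unique up to the residual $O(2)$-symmetry of the normal form, and that the same computation shows the ``orbital'' analogue: any two Type I singularities are locally orbitally equivalent, recovering the sentence preceding the proposition.
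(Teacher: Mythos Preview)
Your proof is correct and follows essentially the same approach as the paper: the paper also dispatches the necessary direction by noting that an isomorphism must preserve periods, and for sufficiency writes down exactly the map $(x_1,y_1)\mapsto\bigl(\sqrt{\phi(R)/R}\,x_1,\sqrt{\phi(R)/R}\,y_1\bigr)$ that you describe as the rotation-equivariant lift of $\phi$. Your discussion of smoothness at the origin via $\phi(t)=t\,u(t)$ is more careful than the paper, which simply asserts the map is smooth, so your version is a slight improvement in rigor.
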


\begin{proof}
It is clear that the left equivalence of the two period functions is a necessary condition for the existence of an isomorphism,
because an isomorphism between two vector fields must preserve the periods of periodic orbits. Conversely, assume that
$f_1 = f_2 \circ \phi$ where $\phi: \bbR_+ \to \bbR_+$ is a local smooth diffeomorphism.  Then it is easy to check that the map
\begin{equation}
 (x_1,y_1) \mapsto (x_2,y_2) = (\sqrt{\frac{\phi(x_1^2+y_1^2)}{ x_1^2 + y_1^2}} x_1, \sqrt{\frac{\phi(x_1^2+y_1^2)}{ x_1^2 + y_1^2}} y_1)
\end{equation}
is a smooth map which sends $X_1$ to $X_2.$
\end{proof}

\subsection{Hyperbolic singularities with eigenvalue 0 (Type II)} \label{Section:hyperbolic0} 
A singular point $p \in \Sigma$ of a smooth integrable system $(X, \cF)$ on $\Sigma$ is called 
{\bf nondegenerate hyperbolic singular point with eigenvalue 0}, or also 
a singular point of {\bf Type II} if it satisfies the 
following conditions:

i) The linear part of $X$ at $p$ has two different real eigenvalues: $\gamma_1 =0$ and $\gamma_2 \neq 0$.

ii) There is $F \in \cF$ which is not flat at $p$, i.e. its Taylor series is non-trivial.

According to the local geometric linearization theorem \cite{Zung-Nondegenerate2012,Zung-SmoothLinearzation2012}, 
for each nondegenerate
hyperbolic singular point $p$ with eigenvalue 0, there is a local smooth coordinate system $(x,y)$ in a neighborhood 
$\cU$ of $p$ such that $X$ has the normal form
\begin{equation} \label{eqn:NF2}
 X = \gamma(y)x\frac{\partial}{\partial x},
\end{equation}
where $\gamma(0) \neq 0$, and $F$ is a function of $y$. In particular, the set of singular points of $X$ near $p$
is a smooth curve $\cS$ given by the equation
\begin{equation}
 \cS = \{q \in \cU \ |\ X(q) = 0\} = \{q \in \cU \ |\ x(q) = 0\},
\end{equation}
and for each point $q \in \cS$, the eigenvalue of $X$ at $q$ is equal to $\gamma(y(q))$. Since the eigenvalue does not 
depend on the choice of coordinates, the eigenvalue function on $\cS$,
\begin{equation}
 q \in \cS \mapsto f(y(q))
\end{equation}
  is a local invariant of $X$ at $p$. We can formulate this fact as the following proposition, whose proof is straightforward.

\begin{prop} \label{prop:TypeII}
 Two smooth integrable systems $(X_1,\cF_1)$ and $(X_2,\cF_2)$ are locally smoothly isomorphic near 
two respective  singular points $p_1$ and $p_2$ of Type II if and only if there is a local eigenvalue-preserving diffeomorphism 
from the curve $\cS_1$ of singular points of $X_1$ near $p_1$ to the curve $\cS_2$  of singular points of $X_2$ near $p_2$. 
In terms of local normal forms, $X_1 = \gamma_1(y)x\frac{\partial}{\partial x}$ is locally smoothly isomorphic to 
$X_2 = \gamma_2(y)x\frac{\partial}{\partial x}$ if and only if $\gamma_1$ is left equivalent to $\gamma_2$, 
i.e. there is a local diffeomorphism $\psi: (\bbR, 0) \to (\bbR, 0)$
such that $\gamma_1(y) = \gamma_2(\psi(y))$.
\end{prop}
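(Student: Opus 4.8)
The plan is to establish the two implications separately, working throughout in the geometric normal form \eqref{eqn:NF2}.

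For the ``only if'' direction, suppose $\Phi$ is a local diffeomorphism with $\Phi_*X_1 = X_2$. First I would observe that $\Phi$ carries the zero set of $X_1$ onto the zero set of $X_2$ (if $X_1(q)=0$ then $(\Phi_*X_1)(\Phi(q)) = D\Phi(q)X_1(q) = 0$, and conversely since $D\Phi(q)$ is invertible), so $\Phi$ restricts to a local diffeomorphism $\cS_1 \to \cS_2$. Next I would recall the standard fact that if $q$ is a zero of a vector field $V$ and $\Phi$ a diffeomorphism, then the linear part of $\Phi_*V$ at $\Phi(q)$ is $D\Phi(q)$-conjugate to the linear part of $V$ at $q$, hence has the same eigenvalues; applied pointwise along $\cS_1$, this shows $\Phi|_{\cS_1}$ preserves the eigenvalue function. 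Finally, via \eqref{eqn:NF2} — in which $\cS_i = \{x=0\}$ is parametrized by $y$, with eigenvalue $\gamma_i(y)$ at the point $y$ — such an eigenvalue-preserving diffeomorphism is precisely a germ $\psi\colon(\bbR,0)\to(\bbR,0)$ with $\gamma_1 = \gamma_2\circ\psi$, i.e. a left equivalence.

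For the ``if'' direction, given a left equivalence $\gamma_1 = \gamma_2\circ\psi$, I would simply exhibit the isomorphism $\Phi(x,y) = (x,\psi(y))$ and verify $\Phi_*X_1 = X_2$ by a one-line computation: since $D\Phi = \mathrm{diag}(1,\psi'(y))$ and $X_1$ has components $(\gamma_1(y)x,\,0)$, the pushforward at $(x',y')=(x,\psi(y))$ equals $\gamma_1(\psi^{-1}(y'))\,x'\,\frac{\partial}{\partial x'} = \gamma_2(y')\,x'\,\frac{\partial}{\partial x'} = X_2$, using $\gamma_1\circ\psi^{-1} = \gamma_2$. Unwinding the normal-form identification, an abstract eigenvalue-preserving diffeomorphism $\cS_1\to\cS_2$ produces such a $\psi$, which closes the equivalence.

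I do not anticipate a genuine obstacle — this is the ``straightforward'' proposition the text promises. The only points needing care are the two bookkeeping facts: that a smooth isomorphism automatically respects the singular curves, and that the linear part at a fixed point is a conjugacy invariant, so that eigenvalues descend to a well-defined invariant function on $\cS$ (well-defined since $\gamma_i(0)\neq 0$ forces every nearby point of $\cS$ to again be of Type II). It is worth noting that one never needs to rescale the $x$-coordinate: any map $(x,y)\mapsto(a(y)x,\psi(y))$ with $a$ nonvanishing pushes $X_1$ to the same field, so the hypothesis that a non-flat first integral exists enters only through the normalization \eqref{eqn:NF2}, after which the classification collapses to left equivalence of the one-variable germs $\gamma_i$.
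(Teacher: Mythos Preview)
Your argument is correct and is exactly the kind of straightforward verification the paper has in mind; indeed, the paper does not give a proof at all beyond declaring it ``straightforward,'' and your write-up supplies precisely the two expected ingredients (invariance of the singular curve and of the eigenvalues under pushforward for the necessity, and the explicit map $(x,y)\mapsto(x,\psi(y))$ for the sufficiency).
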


Observe that the function $f(y) = 2\pi  \sqrt{-1}/ \gamma(y)$, where $\gamma(y)$ is the  \emph{eigenvalue function}  
in the local normal form \eqref{eqn:NF2} can also be interpreted as a local
\emph{period function} as follows: 
Complexify the system in the coordinate $x$ (so $x$ is now a complex coordinate, and $y$ is still
a real coordinate, the manifold has 1 complex dimension plus 1 real dimension). 
Then the flow of $X$ on each local complex line $\{y= const\}$ is periodic in imaginary time and is of period
equal to $f(y).$ So we will call $f(y)$ the (imaginary) local {\bf period function} of $X$ near a singular point of Type II. 
Proposition \ref{prop:TypeII} can be paraphrased as follows: Type II singular points are classified up to 
local isomorphisms by the left equivalence class of their imaginary period functions.

\subsection{Hyperbolic singularities without eigenvalue 0 (Type III)}

To say that $p$ is a singular point of Type III of $(X,\cF)$ means that  
$X$ has two non-zero real eigenvalues at $p$, and there is a first integral which is not flat at $p$.
According to the local geometric linearization theorem 
\cite{Zung-Nondegenerate2012,Zung-SmoothLinearzation2012}, $X$ has the following  normal form:
\begin{equation} \label{eqn:NF_III}
X = h(x^ay^b)(\frac{x}{a}\frac{\partial}{\partial y} -\frac{y}{b}\frac{\partial}{\partial x})
\end{equation}
where $a, b \in \bbN$ are coprime, and $h$ is a smooth function such that $h(0) \neq 0$. 

In the above canonical coordinates, the function $x^ay^b$ generates the ring  local first integrals of $X$.
More precisely, if $G$ is a smooth local first integral, 
then in each local quadrant $\{\epsilon x \geq 0, \delta  y \geq 0\}$,
where $\epsilon = \pm$ and $\delta = \pm$, we can write
\begin{equation}
G (x,y) = g^{\epsilon, \delta} (h(x^ay^b)) 
\end{equation}
where $g^{+,+},g^{+,-},g^{-,+},g^{-,-}$ are smooth functions which have the same Taylor series at 0.
The proof of this fact follows easily from formal computations.

Similarly to the Type II case, $X$ is of (complex) toric degree 1 (see \cite{Zung-Convergence2002} for the notion
of toric degree of a vector field at a singular point), and the $\bbT^1$-action in the complexified space associated to 
$X$ is generated by the linear vector field 
\begin{equation}
Y = 2 \pi  \sqrt{-1} (\frac{x}{a}\frac{\partial}{\partial y} -\frac{y}{b}\frac{\partial}{\partial x}) 
\end{equation}
in canonical coordinates. In the analytic case, $Y$ is uniquely determined by $X$ (see \cite{Zung-Convergence2002}), 
though in the smooth case it is only unique up to a flat term. So any automorphism or isomorphism of $X$ will also
be an automorphism or isomorphism of $Y$ up to a flat term. Thus, in order to understand the local automorphisms of $X$,
we need to understand the automorphisms of the linear vector field $Y$ (or $Y / (2 \pi \sqrt{-1})$).

 The function $h(x^ay^b)$ in the normal form \eqref{eqn:NF_III} 
is directly related to the local period function of $X$ in the complexified space
(at least in the analytic case) : The flow of the  vector field 
$X = h(x^ay^b)(\frac{x}{a}\frac{\partial}{\partial y} -\frac{y}{b}\frac{\partial}{\partial x})$ in $\bbC^2$ in the imaginary time
will have closed orbits with periods equal to $\frac{2 \pi a b \sqrt{-1}}{h(x^ay^b)}.$
We will call $h(x^ay^b)$ the {\bf frequency function} of $X$ near $p$.

\begin{prop} \label{eqn:preservesX1} Let $a$ and $b$ be two coprime natural numbers. 

1) A local real analytic diffeomorphism $(x,y) \mapsto (x_1,y_1)$ preserves the linear vector field 
$\displaystyle \frac{x}{a}\frac{\partial}{\partial x} - \frac{y}{b}\frac{\partial}{\partial y}$ if and only if 
there are two local analytic functions $\rho_1, \rho_2$ of one variable such that $\rho_1(0) \neq 0, \rho_2(0) \neq 0$ and
\begin{equation}
\begin{cases}
 x_1 = x\rho_1(x^ay^b)\\
y_1 = y\rho_2(x^ay^b)
\end{cases}.
\end{equation}

2) A local smooth diffeomorphism $(x,y) \mapsto (x_1,y_1)$ preserves the linear vector field 
$\displaystyle  \frac{x}{a}\frac{\partial}{\partial x} - \frac{y}{b}\frac{\partial}{\partial y}$
if and only if 
there are four couples of local smooth functions $\rho_1^{\varepsilon,\delta}, \rho_2^{\varepsilon,\delta}$ 
where $\varepsilon = \pm$ and $\delta = \pm$, which do not vanish at 0, such that the Taylor series 
of  $\rho_1^{\varepsilon,\delta}$ and $\rho_2^{\varepsilon,\delta}$ do not depend on $\varepsilon$ and $\delta$, and such that
\begin{equation}
\begin{cases}
 x_1 = x\rho_1^{\varepsilon,\delta} (x^ay^b)\\
y_1 = y\rho_2^{\varepsilon,\delta}(x^ay^b)
\end{cases}
\end{equation}
if $\varepsilon x \geq 0, \delta y \geq 0$.
\end{prop}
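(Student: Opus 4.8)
The plan is to linearize the conjugation equation: a diffeomorphism $\Psi:(x,y)\mapsto(x_1,y_1)$ preserves the linear vector field $Z=\frac{x}{a}\partial_x-\frac{y}{b}\partial_y$ precisely when $\Psi_*Z=Z$, i.e. when the components $x_1,y_1$ satisfy the linear PDEs $Z(x_1)=\frac{x_1}{a}$ and $Z(y_1)=-\frac{y_1}{b}$. So both parts reduce to describing the solutions of a single scalar equation $Z(u)=\frac{u}{a}$ (and, after $y\leftrightarrow x$, $b\leftrightarrow a$ with a sign flip, the equation $Z(v)=-\frac{v}{b}$), subject to $u=x+O(|x,y|^2)$ being the first component of a diffeomorphism.

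First I would compute in the monomial basis. Since $Z(x^py^q)=\left(\frac{p}{a}-\frac{q}{b}\right)x^py^q$, a formal power series $u=\sum c_{pq}x^py^q$ solves $Z(u)=\frac{u}{a}$ iff $c_{pq}=0$ whenever $\frac{p}{a}-\frac{q}{b}\neq\frac{1}{a}$, i.e. whenever $(p-1)b\neq qa$. Because $\gcd(a,b)=1$, the resonant exponents are exactly $p=1+ka$, $q=kb$ for $k\geq 0$ (using $p\geq 0$, and noting $p=0$ would force $q$ negative). Hence the formal solution is $u=x\sum_{k\geq0}c_k(x^ay^b)^k=x\,\rho_1(x^ay^b)$ for a formal series $\rho_1$ of one variable, and similarly $v=y\,\rho_2(x^ay^b)$; the diffeomorphism condition at $0$ forces $\rho_1(0)=c_0\neq0$ and $\rho_2(0)\neq0$, and conversely any such pair gives a formal diffeomorphism preserving $Z$ (invertibility of $(x,y)\mapsto(x\rho_1,y\rho_2)$ near $0$ is the nonvanishing of the Jacobian, which is $\rho_1(0)\rho_2(0)\neq0$ up to terms vanishing at the origin). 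For part 1), in the real-analytic category the formal series $\rho_1,\rho_2$ automatically converge: they are extracted from the convergent series of $x_1,y_1$ by restriction/division, or one observes directly that $\rho_1(t)=x_1(x,y)/x$ is analytic since the numerator vanishes on $\{x=0\}$ to first order with analytic quotient, and it is annihilated by the reduced vector field so depends only on $x^ay^b$.

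The extra subtlety — and the main obstacle — is part 2). The naive argument "solve the PDE, get a series in $x^ay^b$" breaks down smoothly for two reasons. The \emph{first} is that on the quadrant boundary the integral curves of $Z$ (the hyperbola branches $x^ay^b=\mathrm{const}$) do not connect the four open quadrants, and a smooth function constant along the $Z$-flow need only be a smooth function of $x^ay^b$ \emph{separately in each quadrant} $\{\varepsilon x\geq0,\delta y\geq0\}$ — this is exactly the phenomenon already noted in the text for first integrals $G$, with the four branches $g^{\varepsilon,\delta}$ sharing a common Taylor series. The \emph{second}: even within one quadrant, "$Z(u)=\frac{u}{a}$ and $u$ smooth" must be integrated; the key point is that in the quadrant $\{x>0,y>0\}$ one can introduce coordinates $(s,t)=(\ln x\cdot ?,\,x^ay^b)$ adapted to $Z$ and see that $u/x$ is $Z$-invariant, hence a smooth function of $t=x^ay^b$ on that quadrant, extending smoothly to the closed quadrant. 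Then the flatness statement of the geometric linearization / the shared-Taylor-series property (again as for the $g^{\varepsilon,\delta}$) forces the Taylor series of $\rho_1^{\varepsilon,\delta}$ at $0$ to be independent of $\varepsilon,\delta$, since $x_1$ is a single globally smooth function. I would handle the division "$u/x$ is smooth up to $x=0$" via Hadamard's lemma (Taylor with integral remainder) applied to $u$ in the $x$ variable, using $u|_{x=0}=0$, and then re-derive $Z$-invariance of the quotient from the equation $Z(u)=u/a$. Conversely, any quadruple $\rho_i^{\varepsilon,\delta}$ with nonvanishing values at $0$ and common Taylor series patches to a single $C^\infty$ map $\Psi$ (the matching of $\infty$-jets across the axes is exactly what guarantees $C^\infty$ gluing), with Jacobian nonzero at $0$, hence a local diffeomorphism, and it preserves $Z$ by construction. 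The delicate bookkeeping is therefore the quadrant-by-quadrant smooth division and the verification that the jets glue — everything else is the linear-algebra computation of resonances above.
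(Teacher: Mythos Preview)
Your proposal is correct and reaches the same conclusion as the paper, but by a somewhat different path. The paper argues dynamically: since $\{x=0\}$ and $\{y=0\}$ are the stable and unstable manifolds of $Z$, any $Z$-preserving diffeomorphism must preserve them, giving $x_1 = x\,\theta_1(x,y)$ and $y_1 = y\,\theta_2(x,y)$ immediately; then, because the flow $\phi_t(x,y)=(e^{t/a}x,e^{-t/b}y)$ scales $x$ and $x_1$ by the same factor, the quotient $\theta_1=x_1/x$ is flow-invariant, i.e.\ a first integral, hence (analytically) a function of $x^ay^b$. You instead recast invariance as the linear PDE $Z(x_1)=x_1/a$, diagonalize $Z$ on monomials, and read off the resonant lattice $(p,q)=(1+ka,kb)$ directly. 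Your route makes the algebraic structure (and the fractional-power phenomena used later in the paper) explicit from the start; the paper's route is shorter and more geometric, and both converge on the same key step ``$x_1/x$ is a first integral''.

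One point to tighten in part 2): you invoke Hadamard's lemma ``using $u|_{x=0}=0$'', but your resonance computation only shows that the \emph{Taylor series} of $u$ is divisible by $x$, which in the $C^\infty$ category does not by itself force $u$ to vanish on $\{x=0\}$. The paper's stable-manifold observation supplies this in one line. Alternatively, stay within your PDE framework: restricting $Z(u)=u/a$ to $\{x=0\}$ yields $-\tfrac{y}{b}\partial_y u(0,y)=\tfrac{1}{a}u(0,y)$, whose only solutions are $C\,|y|^{-b/a}$ on each half-axis, and smoothness at the origin forces $C=0$. With that filled in, your quadrant-by-quadrant discussion and the gluing of $\infty$-jets is actually more detailed than the paper's own (essentially one-sentence) treatment of the smooth case.
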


\begin{proof}
 1) Let $(x,y) \mapsto (x_1,y_1)$  be a local real analytic diffeomorphism which preserves the linear vector field
 $X^{(1)} = \frac{x}{a}\frac{\partial}{\partial x} - \frac{y}{b}\frac{\partial}{\partial y}$. Since $\{ x = 0\}$ (resp. $\{ x = 0\}$)
is the stable (resp. unstable) manifold of $X^{(1)}$, we must have $x_1= 0$ on $\{ x = 0\}$ and $y_1= 0$ on $\{ y = 0\}$. In other words,
we can write $x_1 = x.\theta_1(x,y), y_1 = y.\theta_2(x,y)$ where $\theta_1, \theta_2$ are two analytic functions. Note that the time-$t$
flow of the linear vector field $X^{(1)}$ multiplies $x$ by $e^{t/a}$, and also multiplies $x_1$ by $e^{t/a}$ because the map 
$(x,y) \mapsto (x_1,y_1)$ preserves $X^{(1)}$. Therefore the quotient $\frac{x_1}{x}=\theta_1(x,y)$ is invariant by the flow of  $X^{(1)}$.
In other words, $\theta_1(x,y)$ is a first integral of  $X^{(1)}$. Any local analytic first integral $\theta_1(x,y) = \rho_1(x^ay^b)$.
Similarly, we have $\theta_2(x,y) = \rho_2(x^ay^b)$.

Conversely, it is easy to see that the map $(x,y) \mapsto (x\rho_1(x^ay^b),y\rho_2(x^ay^b))$ preserves 
$X^{(1)} = \frac{x}{a}\frac{\partial}{\partial x} - \frac{y}{b}\frac{\partial}{\partial y}$.

2) The proof in the smooth case is similar, except that we must write $\theta_1(x,y) = \rho_1^{\varepsilon,\delta}$
dependent on the quadrant in $\bbR^2$, where $\rho_1^{+,+}, \rho_1^{+,-}, \rho_1^{-,+}$ and $\rho_1^{-,-}$ are four functions
which have the same Taylor series at 0. (Actually, the number of functions can be reduced from 4 to 2, because, for example, if $a$ is odd 
then $\rho_1^{+,\delta}$ and $\rho_1^{-,\delta}$ can be chosen to be the same function).
\end{proof}

\begin{prop} \label{prop:TypeIII_local}
Let $a$ and $b$ be two coprime natural numbers.

1) Two local real analytic vector fields 
$X_1 = h_1(x_1^ay_1^b)\big(\frac{x_1}{a}\frac{\partial}{\partial x_1} - \frac{y_1}{b}\frac{\partial}{\partial y_1}\big)$
and $X_h = h_2(x_2^ay_2^b)\big(\frac{x_2}{a}\frac{\partial}{\partial x_2} - \frac{y_2}{b}\frac{\partial}{\partial y_2}\big)$
are locally analytically isomorphic if and only if $h_1$ and $h_2$ are left equivalent, i.e. we can write $h_1 = h_2\circ \psi$
where $\psi: (\bbR, 0) \to (\bbR, 0)$ is analytic with $\psi'(0) \neq 0$.

2) In the smooth case, when $h_1$ and $h_2$ are smooth functions, then $X_1$ and $X_2$ are locally smoothly isomorphic if 
and only if $h_1$ and $h_2$ are formally left-equivalent, i.e. 
\begin{equation}
Taylor(h_1) = Taylor(h_2)\circ \psi 
\end{equation}
where $\psi$ is a formal series with $\psi(0) = 0, \psi'(0) \neq 0$, and $Taylor(h)$ means the Taylor series of $h$.
\end{prop}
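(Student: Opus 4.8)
The plan is to handle the two directions of each part separately, working throughout with the factorization $X_i = h_i(u_i)Z_i$, where $u_i = x_i^ay_i^b$ and $Z_i = \frac{x_i}{a}\frac{\partial}{\partial x_i} - \frac{y_i}{b}\frac{\partial}{\partial y_i}$ is the linear field of Proposition~\ref{eqn:preservesX1}. Recall from the discussion above that $X_i$ canonically determines the generator $Y_i = 2\pi\sqrt{-1}\,Z_i$ of its associated $\bbT^1$-action --- uniquely in the analytic category, and up to a flat term in the smooth category \cite{Zung-Convergence2002} --- and that $u_i$ generates the ring of (local, quadrant-wise) first integrals of $X_i$. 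The necessity direction in both parts is extracted from these two facts; sufficiency in the analytic case is an explicit construction of the wanted diffeomorphism among the $Z$-preserving maps of Proposition~\ref{eqn:preservesX1}, and sufficiency in the smooth case needs in addition a homotopy argument to kill a flat discrepancy.

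\emph{Necessity.} Let $\Phi$ be a local diffeomorphism with $\Phi_*X_1 = X_2$. Since $\{x_i = 0\}$ and $\{y_i = 0\}$ are the stable and unstable manifolds of $X_i$, the map $\Phi$ sends $\{x_1 = 0\}$ to $\{x_2 = 0\}$ and $\{y_1 = 0\}$ to $\{y_2 = 0\}$, so $x_2\circ\Phi = x_1\cdot(\text{unit})$, $y_2\circ\Phi = y_1\cdot(\text{unit})$, hence $u_2\circ\Phi = u_1\cdot(\text{unit})$. On the other hand $u_2\circ\Phi$ is a first integral of $X_1$, so quadrant-wise it equals $\chi(u_1)$ for a function $\chi$ of one variable; comparing, $\chi(u_1)/u_1$ must be a unit, which forces $\chi'(0)\neq 0$, i.e. $\chi$ is an (analytic, resp. formal) diffeomorphism germ of $(\bbR,0)$. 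By the canonicity of $Y_i$, $\Phi$ carries $Z_1$ to $Z_2$ (exactly in the analytic case, up to a flat field in the smooth case). Pushing $X_1 = h_1(u_1)Z_1$ forward, using $u_1\circ\Phi^{-1} = \chi^{-1}(u_2)$, and comparing with $X_2 = h_2(u_2)Z_2$, one gets $h_1 = h_2\circ\chi$ in the analytic case; in the smooth case the flat term only shows that $h_1(\chi^{-1}(u_2)) - h_2(u_2)$ is flat at the origin, and passing to Taylor series gives $Taylor(h_1) = Taylor(h_2)\circ\chi$. Taking $\psi = \chi$ proves necessity for both parts.

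\emph{Sufficiency, analytic case.} Given $h_1 = h_2\circ\psi$ with $\psi$ an analytic diffeomorphism germ, I would look for $\Phi$ of the form $(x_1,y_1)\mapsto\big(x_1\rho_1(u_1),\,y_1\rho_2(u_1)\big)$ as in Proposition~\ref{eqn:preservesX1}(1): such a map sends $Z_1$ to $Z_2$ and $u_1$ to $u_1\rho_1(u_1)^a\rho_2(u_1)^b$, so $\Phi_*X_1 = X_2$ as soon as $u_1\rho_1(u_1)^a\rho_2(u_1)^b = \psi(u_1)$, that is $\rho_1^a\rho_2^b = g$ with $g(u) := \psi(u)/u$ analytic and $g(0) = \psi'(0)\neq 0$. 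Since $a,b$ are coprime, pick integers $m,n$ with $am + bn = 1$ and set $\rho_1 = g^m$, $\rho_2 = g^n$ (well defined and analytic near $0$; if $g(0) < 0$ one of $a,b$ is odd and can be used to correct the sign of the corresponding $\rho_i$). Then $\rho_i(0)\neq 0$, $\Phi$ is a local diffeomorphism, and $\Phi_*X_1 = X_2$.

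\emph{Sufficiency, smooth case, and the main difficulty.} First realize the formal series $\psi$ by a genuine smooth diffeomorphism germ $\widetilde\psi$ of $(\bbR,0)$ (Borel); the construction of the previous paragraph goes through in the smooth category and replaces $X_2$ by the smoothly isomorphic field $h_2(\widetilde\psi(u))Z_2$, whose frequency function has the same Taylor series as $h_1$. Thus it remains to treat the case where $h_1 - h_2$ is flat, and this is the step I expect to be the crux: the analytic construction does not adapt, as it would require solving $h_2\circ\sigma = h_1$ for a smooth germ $\sigma$, which in general has no solution. I would instead use a Moser-type homotopy: set $X_t = \big(h_2 + t(h_1 - h_2)\big)(u)\,Z$ for $t\in[0,1]$ (each $X_t$ vanishes only at the origin, since $h_2(0)\neq 0$), and solve $\frac{d}{dt}X_t = \mathcal{L}_{V_t}X_t$ with the Ansatz $V_t = \beta_t X_t$; this reduces to the cohomological equation $Z(\beta_t) = \mu_t(u)$ with $\mu_t := -(h_1 - h_2)/\big(h_2 + t(h_1 - h_2)\big)^2$ a flat function of $u$. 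Away from the axes this is solved explicitly by $\beta_t = \mu_t(u)\big(\tfrac a2\ln|x| - \tfrac b2\ln|y|\big)$; the key point is that the flatness of $\mu_t$ in $u$ dominates the logarithmic growth of $\tfrac a2\ln|x| - \tfrac b2\ln|y|$ on a neighborhood of the origin, so $\beta_t$ extends to a smooth function there, vanishing to infinite order along the axes and with all $t$-derivatives under control. Integrating the time-dependent field $V_t = \beta_t X_t$ then produces a smooth local diffeomorphism $\Phi$, fixing the axes to infinite order, with $\Phi_*X_2 = X_1$, which finishes the proof.
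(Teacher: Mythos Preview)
Your argument is correct. The necessity direction and the analytic sufficiency match the paper's proof almost exactly: both rely on the canonicity of the associated $\bbT^1$-generator $Z$ (from \cite{Zung-Convergence2002}) to reduce to the $Z$-preserving maps of Proposition~\ref{eqn:preservesX1}, and then read off the left-equivalence of $h_1,h_2$. Your explicit sufficiency map in the analytic case, obtained via a B\'ezout relation $am+bn=1$ and $\rho_1=g^m$, $\rho_2=g^n$, is a mild variant of the paper's choice $x_2=x_1,\ y_2=y_1\sqrt[b]{\psi(u_1)/u_1}$; both live in the same family of $Z$-preserving maps and solve the same equation $\rho_1^a\rho_2^b=\psi(u)/u$.

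The genuine divergence is in the smooth sufficiency. The paper does not attempt any construction: it simply observes that formal left-equivalence of $h_1,h_2$ gives a formal conjugacy between the two hyperbolic germs $X_1,X_2$, and then invokes the Sternberg--Chen theorem \cite{Chen-Equivalence1963,Sternberg-structure1958} to upgrade formal to smooth. Your route---realize $\psi$ by Borel, reduce to $h_1-h_2$ flat, and then run a Moser path with $V_t=\beta_t X_t$ where $\beta_t=\mu_t(u)\big(\tfrac a2\ln|x|-\tfrac b2\ln|y|\big)$ solves $Z(\beta_t)=\mu_t(u)$---is a correct and more self-contained alternative: it avoids the Sternberg--Chen black box at the cost of an explicit estimate (flatness of $\mu_t$ in $u=x^ay^b$ beats the logarithmic singularity along the axes, so $\beta_t$ and all its derivatives extend smoothly, and $V_t$ is flat along the axes, guaranteeing a time-$1$ flow near the origin). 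What the paper's approach buys is brevity and a clear conceptual message (the only obstruction is formal); what yours buys is an elementary proof that stays within the toolkit already used elsewhere in the paper (Moser's path method appears again in Proposition~\ref{prop:nilpotent1}).
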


\begin{proof}
1) The toric degree of $X_1$ (in the sense of \cite{Zung-Convergence2002}) is 1, and the corresponding $\bbT^1$-action 
in the complexified space which preserves $X$ is generated by 
$\frac{x_1}{a}\frac{\partial}{\partial x_1} - \frac{y_1}{b}\frac{\partial}{\partial y_1}$.  
Thus if there is a local analytic diffeomorphism $\Phi$ such that $\Phi_*X_1 = X_2$, then we also have 
$\Phi_*\big(\frac{x_1}{a}\frac{\partial}{\partial x_1} - \frac{y_1}{b}\frac{\partial}{\partial y_1}\big) = 
\big(\frac{x_2}{a}\frac{\partial}{\partial x_2} - \frac{y_2}{b}\frac{\partial}{\partial y_2}\big)$. 
According to Proposition \ref{eqn:preservesX1}, 
after the diffeomorphism $\Phi$ we can write 
\begin{equation}
\begin{cases}
 x_2 = x_1\rho(x^ay^b)\\
y_2 = y_1\theta(x^ay^b)
\end{cases}.
\end{equation}
In particular, $x_2^ay_2^b = (x_1^ay_1^b).\rho(x_1^ay_1^b).\theta(x_1^ay_1^b) = \psi(x_1^ay_1^b)$ where $\psi(z) = z\rho(z)\theta(z)$.
We also have $\Phi_*h_1 =h_2$, therefore $h_2 = h_1 \circ \psi$.

Conversely, assume that $h_2 = h_1 \circ \psi$. Then it is easy to check that the map $(x_2,y_2) = \Phi(x_1,y_1)$ given by the formula 
\begin{equation}
\begin{cases}
 x_2 = x_1\\
y_2 = y_1\sqrt[b]{\frac{\psi(x_1^ay_1^b)}{x_1^ay_1^b}}
\end{cases}
\end{equation}
(this map is well-defined because $\frac{\psi(z)}{z} \neq 0$ when $z = 0$) sends $X_1$ to $X_2$.

2) The ``only part'' in the smooth case is absolutely similar to the analytic case. The ``if'' part follows from Sternberg-Chen's theorem
\cite{Chen-Equivalence1963, Sternberg-structure1958} which says that if two smooth hyperbolic vector fields are formally isomorphic then they are locally smoothly isomorphic.
\end{proof}

\section{Semi-local structure of nondegenerate singularities}

\subsection{Level sets with singular points of Type II} \label{subsection:semilocal_II}
Denote by $\cN$ a level set of a weakly nondegenerate smooth integrable system $(X,\cF)$ on a compact surface $\Sigma$, 
which contains a singular point of Type II, but does not contain singular points of the other Type I, III, IV. As will be shown below, 
$\cN$ is a smooth circle. A tubular neighborhood of $\cN$ will be either orientable (a cylinder) or non-orientable (a Mobius band).
The case when it is a Mobius band will be called the {\bf twisted case}, and the case when it is a cylinder will be
called the {\bf non-twisted case}. Notice that if $\Sigma$ is orientable then we only have the non-twisted case. 

\begin{prop} \label{prop:TypeII_semilocal}
 With the above notations and assumptions, we have:

i) $\cN$ is a smooth circle.

ii) $\cN$ contains an even number $2m>0$ of singular points of Type II.

iii) In the non-twisted case there is a tubular neighborhood $\cU(\cN)$ of $\cN$ such that the projection map
\begin{equation}
proj : \cU(\cN) \to \cB
\end{equation}
from $\cU(\cN)$ to the base space $\cB$ of the associated singular fibration is a smooth trivial circle fibration over its image
\begin{equation}
 \cB_{\cU(\cN) } := proj(\cU(\cN)), 
\end{equation}
and the set of singular points  $\cS = \{q \in \cU(\cN) \ |\ X(q) = 0\}$ is a disjoint union
\begin{equation}
 \cS = \bigsqcup_{i=1}^{2m}\cS_i
\end{equation}
of $2m$ smooth sections of the circle fibration $\cU(\cN) \to  \cB_{\cU(\cN)}$, and every point of $\cS$ is singular of Type II. 

iv) In the twisted case, there is still a tubular neighborhood  $\cU(\cN)$ of $\cN$ which is saturated and smoothly foliated by the
level sets of $(X,\cF)$, and $\cN$ is the only exceptional leaf of this foliation in $\cU(\cN)$ (the one with non-trivial holonomy),
the set of singular points $\cS = \{q \in \cU(\cN) \ |\ X(q) = 0\}$ is still a disjoint union 
$\displaystyle \cS = \bigsqcup_{i=1}^{2m}\cS_i$ of an even number of smooth curves.

v) In the non-twisted case, $\cN$ is a regular level set of the associated fibration, i.e.
there is $F \in \cF$ which is regular at $\cN$. In the twisted case, $\cN$ is a singular level set of Morse-Bott type,
i.e. there is $F \in \cF$ such that $F$ is of the type $F=z^2$ in the neighborhood of every point of $\cN$.  
\end{prop}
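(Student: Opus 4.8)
The plan is to derive the entire semi-local picture from the Type II normal form \eqref{eqn:NF2}, first pinning down $\cN$ itself and then patching the local models over a tubular neighbourhood. A level set is a connected component of the common level set of all first integrals, which is closed in $\Sigma$; since $\Sigma$ is compact, $\cN$ is compact. Near a Type II point $p\in\cN$, in the coordinates of \eqref{eqn:NF2} every first integral is a function of $y$, and as $F$ is non-flat the locus $\{F=F(p)\}$ near $p$ is exactly $\{y=0\}$; hence $\cN$ agrees near $p$ with the smooth arc $\{y=0\}$, whose regular part consists of the two half-orbits $\{\pm x>0,\ y=0\}$, while the singular set $\cS=\{x=0\}$ meets $\cN$ only at $p$. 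At a regular point of $X$ on $\cN$ the level set is locally the (smooth) $X$-orbit. Thus $\cN$ is a compact connected smooth $1$-manifold, i.e. a circle, proving (i). Its singular points $p_1,\dots,p_k=\cN\cap\cS$ are then finite in number, and $\cN$ decomposes into $k$ closed arcs — each the closure of one regular orbit — joined cyclically at the $p_i$, with exactly two arcs at each $p_i$. The sign of $\gamma_i(0)$ at $p_i$ decides whether both arcs at $p_i$ point away from $p_i$ (a \emph{source}) or both toward it (a \emph{sink}); since each arc runs from its $\alpha$-limit, necessarily a source, to its $\omega$-limit, necessarily a sink, sources and sinks alternate around the circle. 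Hence $k=2m$ is even, with $m$ of each; and $k>0$ because $\cN$ contains the given Type II point, proving (ii).

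For (iii)–(iv) the plan is to assemble a saturated tubular neighbourhood. Near each $p_i$ the level sets are the lines $\{y=const\}$, and near each regular point of $\cN$ they are the $X$-orbits; these local foliations are transverse to $\cN$, so patching over the compact circle $\cN$ yields a neighbourhood $\cU(\cN)$ that is saturated and smoothly foliated by level sets. As the tubular neighbourhood of a circle in a surface, $\cU(\cN)$ is a cylinder (non-twisted case) or a Mobius band (twisted case). In the non-twisted case the transverse parameter can be chosen coherently around $\cN$: each local function $y$ is a regular first integral on its chart, and these glue, after reparametrisation, to give $proj:\cU(\cN)\to\cB_{\cU(\cN)}$ the structure of a fibration over an interval, whose fibres are the nearby level sets; each such fibre is again a circle carrying one singular point near each $p_i$, by the argument of the previous paragraph, so the fibration is a circle bundle over an interval, hence trivial. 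The set $\cS\cap\cU(\cN)$ is a closed $1$-submanifold (every singular point near $\cN$ is Type II, that condition being open) transverse to the fibres, and near $p_i$ it is $\{x=0\}$, meeting each fibre once; its component $\cS_i$ through $p_i$ is therefore a global section, so $\cS\cap\cU(\cN)=\bigsqcup_{i=1}^{2m}\cS_i$, proving (iii). In the twisted case the same local data assemble on a Mobius band; going once around $\cN$ reverses the transverse orientation, so $\cN$ is the unique leaf with nontrivial holonomy (an orientation-reversing germ on a transversal), each other leaf double-covers $\cN$ and is again a circle, and $\cS\cap\cU(\cN)$ is still a $1$-submanifold with an even number $2m$ of smooth components (the local $\{x=0\}$ pieces), proving (iv).

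For (v) one reads off the type of the first integral from the base. In the non-twisted case $\cB_{\cU(\cN)}$ is a genuine $1$-manifold, so a coordinate on it pulls back to $F\in\cF$ with $dF\neq0$ along $\cN$, and $\cN$ is a regular level set. In the twisted case the coherent transverse parameter exists only up to sign, so $\cB_{\cU(\cN)}$ is modelled on $\{t\ge0\}$ and the smooth first integrals near $\cN$ are exactly the smooth functions of $t^2$; the one corresponding to $t^2$ has the form $F=z^2$ near every point of $\cN$, so $\cN$ is a Morse–Bott singular level set. I expect the main obstacle to be the patching in the middle paragraph: turning the finitely many local normal-form charts and flow boxes into one coherent foliated model on an honest tubular neighbourhood, and extracting from it cleanly the twisted/non-twisted dichotomy together with the holonomy statement and the description of the neighbouring leaves (circles in one case, double covers in the other); the claim that, at a regular point of $X$, the level set is locally just the $X$-orbit is the other point requiring care.
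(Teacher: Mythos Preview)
Your source/sink alternation argument for (ii) is cleaner than the paper's, which simply invokes \cite{ZungMinh-Action2012}. However, the ``main obstacle'' you flag at the end is not a patching technicality: it is the substantive content of the proposition, and your proposal does not contain the idea needed to resolve it. Patching the local normal forms does produce a smooth codimension-one foliation of a tubular neighbourhood of $\cN$, but the first-return map along $\cN$ on a transversal $\cS_1$ is a priori only a germ of diffeomorphism $h$ fixing $p_1$. If $h\neq\mathrm{id}$, the local transverse parameters $y_i$ do \emph{not} glue to a coherent one, nearby leaves spiral rather than close up into circles, and the projection to $\cB$ is not a fibration---so everything you assert in (iii) fails. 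Your sentence ``each such fibre is again a circle \dots by the argument of the previous paragraph'' is circular: that argument used that $\cN$ is a level set, and you have not shown nearby leaves are level sets.

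The paper supplies exactly this missing step, and stresses that it genuinely requires the \emph{global} non-flatness hypothesis (local normal forms alone admit counterexamples). If $h(q_1)\neq q_1$ for some $q_1\in\cS_1$ with, say, $0<y_1(h(q_1))<y_1(q_1)$, then iterating gives a monotone sequence $q_1^{(n)}=h^n(q_1)\to q_1^{\infty}\in\cS_1$. All iterates lie on a single leaf, so every $G\in\cF$ is constant on them; hence $G|_{\cS_1}$ takes the same value on a sequence accumulating at $q_1^{\infty}$ and is flat there. But $q_1^{\infty}$ is a Type II singular point, contradicting the requirement that some $F\in\cF$ be non-flat at it. Thus $h=\mathrm{id}$, and only then does your description of $\cU(\cN)$ hold. (The gap you flag in (i) is related but milder: the paper handles it by first building dynamically a smooth circle $\cK\subset\cN$ from $p$ and the chain of adjacent orbits and Type II points, and then using propagation of non-flatness of $F$ along $\cK$ to conclude $\cN=\cK$; this step does not yet need $h=\mathrm{id}$.)
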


\begin{proof}
i) Denote by $\cK$ smallest closed invariant set of the vector field $X$ which contains $p$ and satisfies
the following additional property: If $q \in \cK$ and $\cO$ is an orbit of $X$ such that its 
closure contains $q$, then $\cO \subset \cK$. (Such a set $\cK$
exists because it is the intersection of all closed invariant sets which satisfy these properties).
It follows immediately from the definition of $\cK$  that $\cK$ is connected.
By continuity, any first integral $F \in \cF$ is constant on $\cK$, so we have $\cK \subset \cN$.

We will show that in fact $\cK$ is a smooth circle and $\cN = \cK$.

Since $\cK \subset \cN$, all singular points of $X$ in $\cK$ are of type II by our assumptions. Remark that, if $q$ is a singular 
point of type II, then there are exactly two regular orbits of $X$ which contain $q$ in their closure, and moreover their union
forms together with $q$ a curve which is smooth at $q$: in local normal form  $X = f(y)x\frac{\partial}{\partial x}$ they are given by 
$\{x<0, y = 0\}$ and $\{x>0, y = 0\}$. Remark also that the number of singular points in $\cK$ is finite, otherwise 
there would exist an accumulation point $q_\infty \in \cK, q_\infty = \lim_{n \to \infty}q_n, q_n \neq q_\infty, q_n \in \cK$ 
singular of Type II, and then any smooth first integral will be constant on $\{q_n, n\in \bbN \} \subset \cK$, which implies that it is 
flat at $q_\infty$, which is a contradiction to our definition of type II singular points.

Starting from the point $p$, take the two regular orbits whose closures contain $p$, then take the singular points in the closure
of these orbits, then take the regular orbits whose closures contain these new singular points, and so on. By definition,
all these orbits and singular points belong to $\cK$. The process must stop after a finite number of steps, because $\cK$
contains a finite number of singular points. This process gives us a smooth curve, so $\cK$ is closed and smooth, i.e. it is a circle.

By our definition of Type II singularities, there is a smooth first integral $F$, which is constant on $\cK$ of course, and
which is not flat at $p$. It follows easily that any point outside of $\cK$ can be separated from $\cK$ by a first integral. In other 
words, $\cK$ is a level set, i.e. $\cK = \cN$, and therefore $\cN$ is a smooth circle.

ii) This statement is a particular simple case of our study of nondegenerate $\bbR^n$-actions on $n$-manifolds in
\cite{ZungMinh-Action2012} (see Subsection 3.2 of \cite{ZungMinh-Action2012}).
\begin{figure}[htb] 
\begin{center}
\includegraphics[width=50mm]{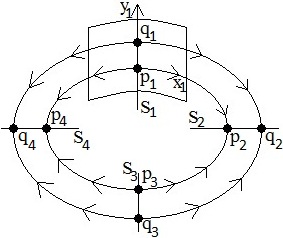}
\caption{Trivial holonomy.}
\label{fig:Trivial-holonomy}
\end{center}
\end{figure}

iii) Consider the non-twisted case. Denote by $p_1 = p, p_2, \hdots, p_{2m}$ the singular points of Type II of $X$
on $\cK$ in a cyclic order. Near each $p_i$ fix a canonical coordinate system $(x_i, y_i)$ in a neighborhood $\cU_i$
of $p_i$ in which $X = f_i(y_i)x_i\frac{\partial}{\partial x_i}$, and such that these coordinate systems give
the same orientation on $\cK$ and in a neighborhood of $\cK$. 
Denote by $\cS_i = \cU_i \cap \{x_i = 0\}$ the curve of singular points of Type II passing through $p_i$.

Take a point $q_1 \in \cS_1, q_1 \neq p_1$, $q_1$ close enough to $p_1$. Then the set $\{y_1 = y_1(q_1), x_1 > 0\}$ 
is a local regular orbit which tends to $q_1$ in one direction. By continuity, this orbit remains close to $\cK$
and enters a small neighborhood $\cU_2$ of $p_2$ in the other direction, and so it tends to some point $q_2 \in \cS_2$ 
close to $p_2$. Similarly, there is a regular orbit of $X$ which tends to $q_2$ in one direction and tends to a point 
$q_3 \in \cS_3$ in the other direction, and so on.  Finally, there is a regular orbit which tends to $q_{2m} \in S_{2m}$
in one direction, and tends to a point $q_1' \in \cS_1$ in the other direction. So we get a smooth map from $\cS_1$ to 
itself defined by $q_1 \mapsto q_1'$, which may be called the {\bf holonomy} of $X$ along $\cK$.
Remark that, in the non-twisted case, $y_i(q_i)$ have the same sign for all $i$, and in particular 
$y_1(q_1)$ and $y_1(q_1')$ have the same sign. Due to the existence of a smooth global first integral which is 
non-flat for every singular point, it is easy to see that the holonomy is actually trivial, i.e. 
$q_1 = q_1'$. Indeed, if $y_1(q_1)> y_1(q_1')>0$ for example, then we can repeat this process (iterate the holonomy map)
to get a sequence of points $q_1, q_1', q_1'', q_1''', \hdots, q_1^{(n)}, \hdots$, which must tend to some 
point $q_1^\infty \in \cS_1$, and any global smooth integral will be flat at $q_1^\infty$, 
which contradicts our assumptions about the singular points of $(X,\cF)$.
(In this proposition, we really need the existence of global non-flat first integrals, and not just local non-flat
 first integrals, otherwise there will be counter-examples). Since the holonomy is trivial, we have a smooth 
 foliation of a neighborhood of $\cN$ into circles with trivial holonomy. The rest of the proof is straightforward.     

iv) The proof of Assertion iv) is similar to the proof of Assertion iii).

v) The proof is straightforward.
\end{proof}
\begin{figure}[htb] 
\begin{center}
\includegraphics[width=40mm]{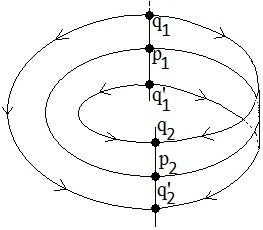}
\caption{Mobius band in the twisted case.}
\label{fig:Mobius-Twistedcase}
\end{center}
\end{figure}

Observe that, on each curve $\cS_i$ of singular points in a tubular neighborhood $\cU(\cN)$ of $\cN$, we have an eigenvalue function, 
whose value at each point $q \in \cS_i$ is the non-zero eigenvalue of $X$ at $q$.
In the non-twisted case, these functions may be viewed as functions on the local base space $\cB_{\cU(\cN)} = proj(\cU(\cN))$ via the projection 
map, which we will denote by $E_1, \hdots, E_{2m}$:
\begin{quote}
For $\xi \in \cB_{\cU(\cN)}, E_i(\xi)$ is the eigenvalue of $X$ at $proj^{-1}(\xi) \cap \cS_i$.
\end{quote}
Of course, these eigenvalue functions on $\cB_{\cU(\cN)}$ are invariants of $X$ in a neighborhood $\cU(\cN)$ of $\cN$. 
In the twisted case, we still have $2m$ eigenvalue functions, but they don't descend to functions on $\cB_{\cU(\cN)}$ in general, only 
to a branched 2-covering of $\cB_{\cU(\cN)}$.

Besides the eigenvalue functions, which are invariant of local character of the vector field $X$, 
there is another invariant of $X$ in $\cU(\cN)$, of a more global character, which was discovered in \cite{ZungMinh-Action2012}
and called the {\bf monodromy}. The monodromy is defined as follows:

Let $M$ be an closed invariant curve (level set) in $\cU(\cN)$, and denote by $q_1, \hdots,q_{2n}$ the singular points of Type II on $M$
in a cyclic order. (if $\cU(\cN)$ is twisted and $M \neq \cN$ then $n= 2m$, otherwise $n=m$). As was observed in \cite{ZungMinh-Action2012},
for each $q_i$ there is an involution (reflection) which preserves $X$ and exchanges the regular orbit on the left of $q_i$ with the orbit on the right of $q_i$.
Take a regular point $z_0$ which lies between $q_{2n}$ and $q_1$, and put $z_1=\sigma_1(z_0),z_2=\sigma_2(z_1),\hdots, z_{2m}=\sigma_{2m}(z_{2m-1})$.
Then $z_{2m}$ and $z_0$ lies on the same orbit of $X$, and there is a unique number $\mu = \mu(M)$ such that
\begin{equation}
\varphi_X^\mu (z_0) = z_{2m}.
\end{equation}
It was observed in \cite{ZungMinh-Action2012} that $\mu$ depends only on $X$, $M$ and the choice of the orientation on $M$: when the orientation
is inversed the $\mu$ changes to $-\mu$.

Since $\mu$ can be defined for each level set $M$ in $\cU(\cN)$, we get a map from $\cB_{\cU(\cN)}$ to $\bbR$, which associates to each element $\xi \in \cB_{\cU(\cN)}$ the monodromy of $X$ on $proj^{-1}(\xi)$ (with respect to a given choice of orientation).

\begin{prop} \label{prop:invariantTypeII}
The topological type of $\cU(\cN)$ (twisted or non-twisted), the number of singular points of Type II on $\cN$, the eigenvalue functions, and the
monodromy function form together a complete set of invariants of a weakly nondegenerate smooth integrable system $(X,\cF)$ in the neighborhood of a 
compact level set $\cN$ which contains only singular points of Type II. 
\end{prop}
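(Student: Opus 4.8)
\emph{Proof strategy.}
Necessity is immediate: a semi-local isomorphism $\Phi:\cU(\cN_1)\to\cU(\cN_2)$ is a diffeomorphism carrying $\cN_1$ to $\cN_2$, so it preserves whether the tubular neighbourhood is twisted and the number $2m$ of Type II points on the central leaf; being a flow conjugacy $\Phi_*X_1=X_2$ it preserves the linearizations of $X$ along the singular curves, hence the eigenvalue functions once the base spaces are identified via the induced map $\bar\Phi:\cB_{\cU(\cN_1)}\to\cB_{\cU(\cN_2)}$; and, carrying singular curves to singular curves and orbits to orbits, it intertwines the intrinsic reflections $\sigma_i$ of \cite{ZungMinh-Action2012} attached to the Type II points (these being determined by $X$ alone), so it takes the chain $z_0\mapsto\sigma_1(z_0)\mapsto\cdots\mapsto z_{2m}$ of a leaf $M$ to the corresponding chain of $\Phi(M)$ and preserves the flow time between its endpoints, i.e. preserves the monodromy function — up to the universal change of sign coming from a change of orientation, which we fix once and for all.

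For the converse the plan is as follows. Suppose the two systems have the same topological type, the same number $2m$ of Type II points on the central leaf, and, after an identification $\bar\Phi$ of base spaces and a cyclic relabelling of the singular curves $\cS_1,\dots,\cS_{2m}$, the same eigenvalue functions $E_1,\dots,E_{2m}$ and the same monodromy function $\mu$. First I would dispose of the twisted case by passing to the orientation double cover, where the neighbourhood is non-twisted and the listed invariants become $\bbZ/2$-equivariant data, a $\bbZ/2$-equivariant isomorphism downstairs then descending; so assume the non-twisted case. By Proposition \ref{prop:TypeII_semilocal}(iii) each $\cU(\cN_k)$ is a trivial circle bundle over an interval, with singular set a disjoint union of $2m$ sections $\cS_i^{(k)}$ in cyclic order; fix a coordinate $b$ on the base making $\bar\Phi$ the identity. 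Near $\cS_i^{(k)}$, take canonical coordinates $(x_i^{(k)},y_i^{(k)})$ with $X_k=\gamma_i^{(k)}(y_i^{(k)})\,x_i^{(k)}\,\partial_{x_i^{(k)}}$; using $b$ itself as the fibre-transverse coordinate this becomes $X_k=E_i(b)\,x_i^{(k)}\,\partial_{x_i^{(k)}}$ (with $E_i^{(1)}=E_i^{(2)}=E_i$ by hypothesis), so the fibre-preserving map $\Phi_i^0$ equating the $(x_i,b)$-coordinates of the two systems is a conjugacy near $\cS_i$ and intertwines the reflections $\sigma_i^{(k)}:x_i\mapsto-x_i$; the only freedom left in $\Phi_i^0$ is post-composition with an automorphism $x_i^{(2)}\mapsto x_i^{(2)}\rho_i(b)$ of the normal form ($\rho_i$ nowhere zero), by the evident Type II analogue of Proposition \ref{eqn:preservesX1}.

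Next I would assemble $\Phi$ by travelling once around $\cN$. On the regular piece $V_i$ between $\cS_i$ and $\cS_{i+1}$, $X_k$ is nowhere zero with orbits equal to the fibres of $proj$, so $X_k=\partial_s$ in coordinates $(b,s)$ that are unique up to $s\mapsto s+c(b)$; hence a conjugacy on $V_i$ is determined by its values near one end and is propagated by the flow to the other end. Starting from $\Phi=\Phi_1^0$ near $\cS_1$, extend it across $V_1,V_2,\dots$ in turn; each time one reaches the next singular curve $\cS_{i+1}$, the extended $\Phi$ differs from $\Phi_{i+1}^0$ by an automorphism $x_{i+1}^{(2)}\mapsto g_i(b)\,x_{i+1}^{(2)}$ of the normal form, which we absorb by replacing $\Phi_{i+1}^0$ with this equally admissible choice, for $i=1,\dots,2m-1$. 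After crossing the last piece $V_{2m}$ one returns to a neighbourhood of $\cS_1$, where the residual freedom has been used up, and the sole obstruction to $\Phi$ being globally well defined is a single function $\tau(b)$: the two determinations of $\Phi$ on $V_{2m}$ differ by $\varphi_{X_2}^{\tau(b)}$ (an automorphism of $X_2$ over the base).

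The hard part — and the heart of the matter — will be to show that $\tau(b)=\pm\big(\mu^{(2)}(b)-\mu^{(1)}(b)\big)$, the monodromy functions being taken over the leaf $proj^{-1}(b)$. Once this is granted, the hypothesis $\mu^{(1)}=\mu^{(2)}$ forces $\tau\equiv0$, so $\Phi$ becomes a genuine fibre-preserving diffeomorphism with $\Phi_*X_1=X_2$ sending singular points to singular points, i.e. a semi-local isomorphism, and the proposition follows. To prove the key identity I would invoke the definition of $\mu$ through the reflections: because $\Phi$, being a local conjugacy fixing the singular curves near every $\cS_i$, intertwines $\sigma_i^{(1)}$ with $\sigma_i^{(2)}$, it carries the chain $z_0^{(1)},\sigma_1(z_0^{(1)}),\dots,z_{2m}^{(1)}$ of a leaf of system $1$ to the corresponding chain of system $2$; the endpoints $z_0$ and $z_{2m}$ both lie on the arc inside $V_{2m}$, where one determination of $\Phi$ is a flow conjugacy and the other is obtained from it by $\varphi_{X_2}^{\tau(b)}$, so comparing $\varphi_{X_1}^{\mu^{(1)}(b)}(z_0^{(1)})=z_{2m}^{(1)}$ with $\varphi_{X_2}^{\mu^{(2)}(b)}(z_0^{(2)})=z_{2m}^{(2)}$ yields the claimed equality. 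This bookkeeping identification is the raison d'être of the monodromy invariant and leans on the companion paper \cite{ZungMinh-Action2012}; the remaining care — carrying out the normalizations smoothly in $b$ and treating the twisted case equivariantly — is routine.
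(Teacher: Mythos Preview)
Your proposal is correct and unpacks precisely the argument that the paper defers to the companion reference: the paper's own proof consists of the single sentence that the non-twisted case is ``a parametrized version of a result of \cite{ZungMinh-Action2012} (see Theorem~3.7 of \cite{ZungMinh-Action2012})'' and that the twisted case reduces to it by a double covering. Your construction --- local conjugacies in canonical coordinates along each $\cS_i$, flow-propagation across the regular arcs, absorbing the mismatches into the rescaling freedom $x_i\mapsto\rho_i(b)x_i$, and identifying the final closing obstruction $\tau(b)$ with the difference of monodromies via the reflection chain --- is exactly the content of that cited theorem, carried out fibre-by-fibre over the base, and your handling of the twisted case by the orientation double cover matches the paper verbatim.
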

\begin{figure}[htb] 
\begin{center}
\includegraphics[width=40mm]{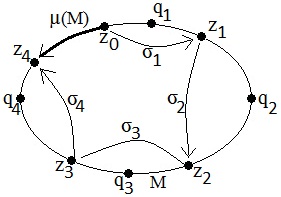}
\caption{Monodromy}
%\label{fig:Mobius-Twistedcase}
\end{center}
\end{figure}
It means that $(X_1,\cF_1)$ in $\cU(\cN_1)$ is semi-locally 
isomorphic to $(X_2,\cF_2)$ in $\cU(\cN_2)$, where $\cN_1$ and $\cN_2$ are level sets of Type II, if and only if 
there is a semi-local fibration-preserving diffeomorphism $\Phi$ from $\cU(\cN_1)$ to $\cU(\cN_2)$ which is 
bijection between the sets of Type II singular points, and which also preserve the monodromy function and the eigenvalue functions.
\begin{proof}
The proof in the non-twisted case is just a parametrized version of a result of \cite{ZungMinh-Action2012} 
(see Theorem 3.7 of \cite{ZungMinh-Action2012}). The twisted case can be reduced to the non-twisted case by taking a double covering.
\end{proof}

\begin{remark}
Proposition \ref{prop:invariantTypeII} remains true when $\cN$ does not contain any 
singular point of $X$ at all, and in that case the monodromy function is nothing but
the period function, i.e. the time it takes for the flow to go full circle on closed orbits.
\end{remark}

\subsection{Level sets with singular points of Type III}
\begin{defn}
We will say that a level set $\cN$ of a smooth integrable system $(X, \cF)$ is a {\bf singular level set of Type III}
if $\cN$ contains at least one singular point of Type III, and any singular point of $X$ in $\cN$ is of Type II or Type III.
\end{defn}
\begin{prop} \label{prop:TypeIII_1}
Let $\cN$ be a singular level set of Type III of integrable system $(X, \cF)$ on a compact surface 
$\Sigma$. Then we have:

i) Topologically, $\cN$ is a connected finite graph, whose edges are regular orbits of $X$,
and whose vertices are singular points of Type II or Type III: Each singular point of Type II is a vertex of 
valency 2, and each singular point of Type III is a vertex of valency 4.
Moreover, $\cN$ is a finite union $\bigcup_{k=1}^{s}\cS_k$ of $s$ smooth circles $\cS_k$ ($s \geq 1$) 
with transversal (self-)intersections at singular points of Type III.

ii) There exists a smooth first integral $F \in \cF$ such that $F=0$ on $\cN$ and the multiplicity of $F$
at each $\cS_k$ is a natural number $m_k$, such that the greatest common divisor of $(m_1, \hdots, m_s)$
is 1 or 2, and such that any other first integral $G \in \cF$ can be written as 
\begin{equation}
 G = g(F) + G_{flat},
\end{equation}
where $g$ is a smooth function and $G_{flat}$ is flat at $\cN$. If  
$gcd(m_1,\hdots, m_s) = 2$, then $F$ can be chosen to be a non-negative function. 
\end{prop}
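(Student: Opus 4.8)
The plan is to analyze $\cN$ by combining the local normal forms at Type II and Type III points with a global argument on the surface $\Sigma$. First I would establish part (i). The key local input is the normal form \eqref{eqn:NF2} at a Type II point, where the singular set is a smooth curve and the two regular orbits $\{x>0,y=0\}$, $\{x<0,y=0\}$ attach smoothly at the singular point, so topologically a Type II point sits in the interior of a smooth arc of $\cN$; and the normal form \eqref{eqn:NF_III} at a Type III point, where near $p$ the level set $\{x^ay^b=0\}$ consists of the four local half-orbits $\{x=0,\pm y>0\}$ and $\{y=0,\pm x>0\}$, giving valency $4$ with a transversal crossing of the two smooth local branches $\{x=0\}$ and $\{y=0\}$. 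Since $\Sigma$ is compact and $(X,\cF)$ is weakly nondegenerate, $\cN$ contains only finitely many singular points (the same flatness-of-a-first-integral argument used in the proof of Proposition~\ref{prop:TypeII_semilocal}(i) rules out accumulation), and each edge is a regular orbit whose closure adds singular points at its two ends; following edges and vertices as in the proof of Proposition~\ref{prop:TypeII_semilocal}(i) exhausts $\cN$ in finitely many steps, so $\cN$ is a finite connected graph. To get the decomposition into circles $\cS_k$: at each Type III vertex pair up the four incident half-edges according to the two smooth branches $\{x=0\}$ and $\{y=0\}$ of the normal form (at a Type II vertex the pairing is forced), and at the resulting $1$-dimensional smooth closed manifold is a disjoint union of abstract circles; the immersion into $\Sigma$ realizes each as a smooth immersed circle $\cS_k$ whose only singularities are transversal double points at Type III vertices.

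Next, part (ii). On each smooth branch $\cS_k$, pick local normal-form coordinates and note that in the Type III normal form \eqref{eqn:NF_III} any local first integral is a function of $x^ay^b$, hence vanishing to a well-defined finite order along the branch $\{x=0\}$ (order a multiple of $b$) and along $\{y=0\}$ (order a multiple of $a$). The first task is to build a global $F\in\cF$ vanishing exactly on $\cN$. Start from the local first integral near each Type III point (a function of $x^ay^b$) and near each Type II point (a function of $y$), all chosen non-flat, and patch them using a partition of unity subordinate to a cover of a neighborhood of $\cN$; since $\cF$ is the ring of first integrals and first integrals on a saturated neighborhood correspond to functions on the base, one can realize the patched object as an honest first integral $F$ with $F=0$ on $\cN$, $dF\neq0$ generically, and prescribed vanishing orders $m_k$ along the $\cS_k$. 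The divisibility structure of the $m_k$ comes from consistency at Type III crossings: at a crossing of $\cS_j$ (playing the role of $\{x=0\}$) and $\cS_k$ (playing the role of $\{y=0\}$) with exponents $a,b$ coprime, the germ $F=(x^ay^b)^\ell\cdot(\text{unit})$ forces $m_j=\ell b$ and $m_k=\ell a$, so locally $\gcd(m_j,m_k)\in\{1,2\}$ depending on parity of $\ell$ and of $a,b$; propagating this through the connected graph $\cN$ and using that the $\cS_k$ are glued along such crossings yields $\gcd(m_1,\dots,m_s)\in\{1,2\}$ globally, with the value $2$ exactly when one is forced to take even multiplicities everywhere — equivalently when a suitable $\bbZ/2$ obstruction (orientation of the normal line bundle / whether all branches carry a square) is nontrivial, and in that case the local models are $F=z^2\cdot(\text{unit})$ so $F$ can be taken non-negative. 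The representation $G=g(F)+G_{flat}$ for arbitrary $G\in\cF$ is then a local statement along $\cN$: in each normal-form chart $G$ is a function of the same invariant ($y$, or $x^ay^b$) as $F$, hence $G=g\circ F$ up to a flat correction on that chart, and the flat corrections patch (flat functions form an ideal) to a global $G_{flat}$ flat on $\cN$, with $g$ well-defined because the charts overlap.

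The main obstacle I expect is the global patching in (ii): producing a single first integral $F$ with the prescribed vanishing orders simultaneously on all branches, while keeping it a genuine element of $\cF$ (not merely a smooth function vanishing on $\cN$), and correctly identifying the $\gcd=1$ versus $\gcd=2$ dichotomy as a genuine $\bbZ/2$-obstruction rather than an artifact of choices. Handling this cleanly will require working on the base space $\cB$ near the image of $\cN$ and checking that the local data (a function of the canonical invariant in each chart, non-flat, with compatible vanishing orders) assemble into a global function on $\cB$; the compatibility at Type III vertices is exactly where the coprimality of $a,b$ and the parity bookkeeping enter, and where the twisted/non-twisted distinction of Type II neighborhoods (Proposition~\ref{prop:TypeII_semilocal}) may force even multiplicities. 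Once $F$ is constructed, the uniqueness-type statement $G=g(F)+G_{flat}$ is routine, following the pattern already used for Types II and III locally.
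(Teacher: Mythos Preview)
Your treatment of part~(i) matches the paper's and is correct.

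For part~(ii), however, your approach has two concrete gaps. First, patching local first integrals by a partition of unity on $\Sigma$ does not produce a first integral: $\psi_1 F_1 + \psi_2 F_2$ is not in $\cF$ unless the $\psi_i$ are themselves constant on orbits. You suggest fixing this by working on the base $\cB$, but the local structure of $\cB$ near the image of $\cN$ is precisely what part~(ii) is meant to establish, so this is circular. Second, your local gcd claim is wrong: at a Type~III crossing with coprime exponents $a,b$ and local invariant $(x^ay^b)^\ell\cdot(\text{unit})$, one gets $m_j=\ell b$ and $m_k=\ell a$, hence $\gcd(m_j,m_k)=\ell\gcd(a,b)=\ell$, which can be any positive integer. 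The restriction $\gcd\in\{1,2\}$ is not a local consequence of coprimality, nor does it follow from ``propagating'' such local relations through the graph (take $G=H^3$ for any non-flat $H\in\cF$: all multiplicities are divisible by~$3$).

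The paper's route is more direct and avoids both problems. One starts from an \emph{existing} non-flat $G\in\cF$ (guaranteed by weak nondegeneracy), reads off its multiplicities $M_k$ along the $\cS_k$, sets $D=\gcd(M_1,\dots,M_s)$ and $m_k=M_k/D$, and then observes that $\sqrt[D]{G^2}$ is a well-defined smooth first integral with multiplicities $2m_k$ (so $\gcd=2$); if $\sqrt[D]{G}$ happens to be single-valued one takes that instead and gets $\gcd=1$. Smoothness of the root uses the compatibility $M_j/\alpha_{ij}=M_k/\alpha_{ik}\in\bbN$ forced at each Type~III crossing by the normal form~\eqref{eqn:NF_III}. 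Prior to this, the paper runs the holonomy/return-map argument (as in the proof of Proposition~\ref{prop:TypeII_semilocal}(iii)) to show that $\cU(\cN)\setminus\cN$ is foliated by smooth invariant circles, which is what makes the multiplicities $M_k$ globally well-defined along each $\cS_k$ in the first place; your proposal omits this step.
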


\begin{proof}
i) The proof of Assertion i) is similar to the proof of Assertion i) of Proposition \ref{prop:TypeII_semilocal}.

ii) Similarly to the proof of Assertion iii) of Proposition 
\ref{prop:TypeII_semilocal}, take a point $q$ on a local curve $L$ which intersects $\cN$
transversally at a regular point $q_0$, and let it go by the flow of $X$ and jump over the points of Type II
whenever the flow tends to such a point. Then we will get a path which moves along some components of $\cN$
and then returns to a point $q'$ on $L$ after going around. In the non-twisted case, when $\cU(\cN)$ is orientable,
then $q$ and $q'$ lie on the same side with respect to $q_0$ on $L$. In the twisted case, the first return point $q'$
may lie on the opposite side of $q$ with respect to $q_0$ on $L$. If $q'$ lies on the opposite side of $q$, then we
continue the path until we return to $L$ again, this time at a point $q"$ which lies on the same side
as $q$. Using arguments similar to the ones in the proof of Proposition \ref{prop:TypeII_semilocal}, one can show that this return 
map must in fact be the identity map, i.e. we have either $q = q'$ or $q=q"$.

It follows from the above arguments that $\cU(\cN)\setminus \cN$ is foliated by smooth invariant circles, and 
these circles are also regular level sets of $(X,\cF)$.

Take an arbitrary smooth first integral $G \in \cF$ which is non-flat at $\cN$.  Then $G(\cS_k) = 0$
for each circle $\cS_k$ in $\cN$, and moreover the multiplicity (i.e. order of vanishing) of $G$ on $\cS_k$
is a finite number $M_k < \infty$.

If $p_i \in \cS_j \cap \cS_k$ with a local canonical coordinate system $(x_{ij}, x_{ik})$ such that 
\begin{equation}
X = f_i(x_{ij}^{\alpha _{ij}}x_{ik}^{\alpha _{ik}})
\big( \frac{x_{ij}}{\alpha _{ij}}\partial x_{ij} - \frac{x_{ik}}{\alpha _{ik}}\partial x_{ik}  \big),
\end{equation} 
and $\alpha _{ij}, \alpha _{ik} \in \bbN, gcd(\alpha _{ij}, \alpha _{ik}) =1$, then the smoothness of
$G$ at $p_i$ implies that 
\begin{equation}
\frac{M_k}{\alpha _{ik}} = \frac{M_j}{\alpha _{ij}} \in \bbN.
\end{equation} 
Notice that, if $\frac{M_k}{\alpha _{ik}} = \frac{M_j}{\alpha _{ij}}$ then it automatically means that
this fraction is a natural number, because $\alpha _{ij}$ and $\alpha _{ik}$ are coprime. Put 
\begin{equation}
m_k = \frac{M_k}{D},
\end{equation} 
where $D=gcd(M_1, \hdots, M_s)$ is the greatest common divisor of $M_1, \hdots, M_s$.
Then we still have $\frac{m_k}{\alpha _{ik}} = \frac{m_j}{\alpha _{ij}} \in \bbN$ for every singular
point $p_i$ of Type II ($p_i \in \cS_j \cap \cS_k$). It is easy to see that the function
\begin{equation}
 \sqrt[D]{G^2}
\end{equation}
is a well defined smooth first integral whose order of vanishing on $\cS_k$ is 
$2m_k$ for all $k = 1, \hdots, s$. Either this function, or its square root $\sqrt[D]{G}$  if a smooth 
single-valued function $\sqrt[D]{G}$ can be defined in a neighborhood of $N$, will be the required first integral.
\end{proof}

\begin{remark}
In the above proposition, even in the non-twisted (i.e. when $\cU(N)$ is orientable) we may have $gcd(m_1,\hdots, m_s) = 2,$
while even in the twisted case we have have $gcd(m_1,\hdots, m_s) = 1$, as the examples 
pictured in Figure \ref{figure:gcd} show. 
\end{remark}

\begin{figure}[htb] 
\begin{center}
\includegraphics[width=120mm]{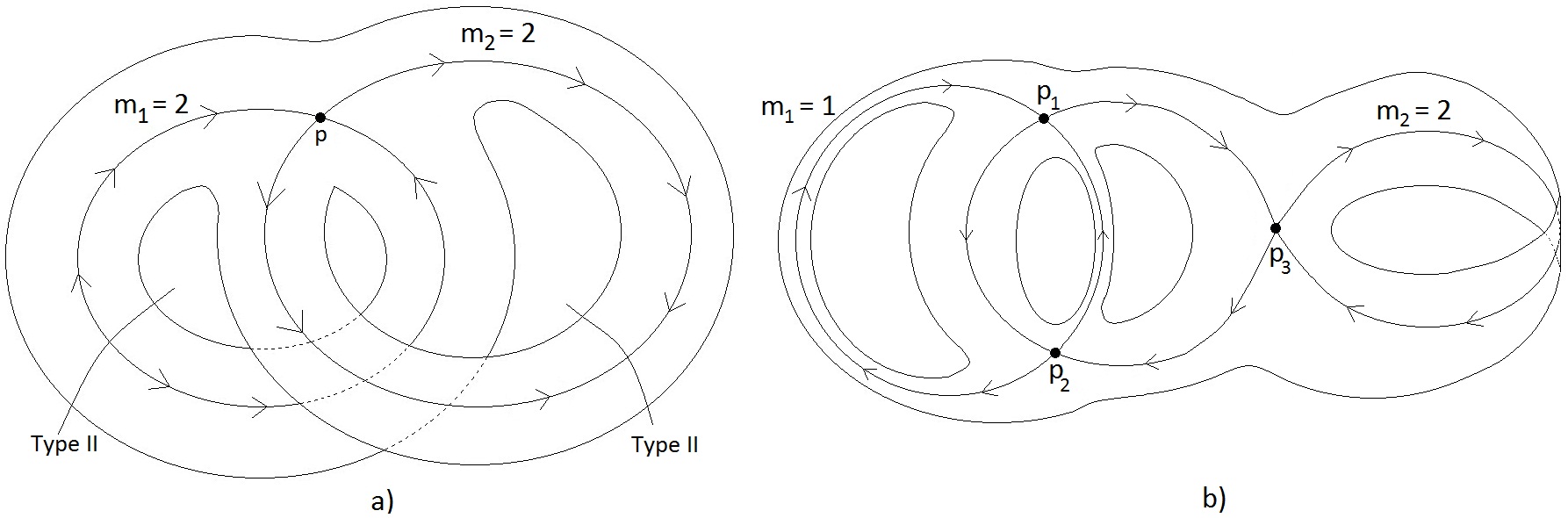}
\caption{a) Non-twisted example with $gcd(m_1,\hdots, m_s) =2$; b) Twisted example with $gcd(m_1,\hdots, m_s) =1$}
 \label{figure:gcd}
\end{center}
\end{figure}

The semi-local first integral function $F$ given by the above proposition can be viewed as a function on the local base space
$\cB_{\cU(N)}$, and will be called a local {\bf coordinate function} on $\cB_{\cU(N)}$: if $G$ is any other smooth function
on $\cB_{\cU(N)}$ then there exists a family of smooth real functions $g_i$ of one variable, one for each local edge of $\cB_{\cU(N)}$
(when $\cB_{\cU(N)}$ is a local graph with one vertex and more than one edges), such that $G = g_i(F)$ on each edge, and
all the functions $g_i$ have the same Taylor expansion (i.e. the diffrence of any two of them is a flat function).

\begin{prop}
Two smooth integrable systems $(X_1,\cF_1)$ and $(X_2,\cF_2)$ near two respective singular level sets of Type III $N_1$
and $N_2$ are semi-locally orbitally equivalent if and only if there is a homeomorphism from a neighborhood of  $N_1$ 
to a neighborhood of $N_2$, which sends $N_1$ to $N_2$, singular points of Type II on $N_1$ to singular points of Type II
on $N_2$, and preserves the ratio $-a:b$ of the two eigenvalues of each singular point of Type III.
\end{prop}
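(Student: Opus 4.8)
The plan is to prove this last proposition, which characterizes semi-local \emph{orbital equivalence} near a Type III level set purely in terms of the combinatorial/topological data (the dual graph, the Type II vertices, and the eigenvalue ratios $-a:b$ at the Type III points).

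\textbf{Necessity.} I would first argue that each listed ingredient is indeed an orbital invariant. A homeomorphism realizing an orbital equivalence carries orbits to orbits and the singular set to the singular set, so it sends $\cN_1$ to $\cN_2$ and respects the graph structure from Proposition~\ref{prop:TypeIII_1}(i); since it matches singular points, and the types I--IV are distinguished topologically (Type II points are valency-2 vertices, Type III points are valency-4 vertices, and the two are separated by the local topology of the foliation near them — a curve of singular points through a Type II point versus an isolated crossing at a Type III point), it must send Type II points to Type II points and Type III points to Type III points. Finally, the eigenvalue ratio $-a:b$ at a Type III point is an orbital invariant because orbital equivalence preserves the unoriented pair of separatrix directions together with the (unparametrized) behavior of nearby orbits: in the normal form \eqref{eqn:NF_III} the level curves $x^ay^b=\text{const}$ determine the coprime pair $(a,b)$, and a homeomorphism preserving the foliation must preserve these multiplicities. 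So I would spell this out and conclude that the homeomorphism in the statement exists.

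\textbf{Sufficiency.} This is the substantive direction and the expected main obstacle. Given such a homeomorphism $\Phi_0$, I would upgrade it to a smooth orbital equivalence in steps. \emph{Step 1 (local models at vertices):} By Proposition~\ref{prop:TypeIII_local} and the local models for Type II, at each Type III vertex the orbital type is already fixed — any two Type III singularities with the same coprime ratio $a:b$ are locally orbitally equivalent, because orbital equivalence forgets the frequency function $h$; similarly for Type II points via Section~\ref{Section:hyperbolic0}. So near each vertex I can choose canonical coordinates and a smooth orbital equivalence matching $\Phi_0$ combinatorially. \emph{Step 2 (the foliated structure of $\cU(\cN)$):} From the proof of Proposition~\ref{prop:TypeIII_1}, $\cU(\cN)\setminus\cN$ is smoothly foliated by invariant circles (regular level sets), so the orbital structure away from $\cN$ is that of a trivial (or, in the twisted case, twisted) circle fibration with no further moduli up to orbital equivalence — the period function does not matter for orbital equivalence. \emph{Step 3 (gluing):} I would then glue the local orbital equivalences at the vertices along the edges (regular orbits of $X$), using a partition of unity / isotopy extension argument, exploiting that on each edge and each nearby invariant circle the reparametrization freedom is unconstrained. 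The twisted case is handled, as elsewhere in the paper, by passing to a double cover, solving the problem equivariantly, and descending.

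\textbf{Main obstacle.} The delicate point is the gluing in Step 3: one must patch together the finitely many local smooth orbital equivalences near the vertices into a single global smooth diffeomorphism of $\cU(\cN_1)$ onto $\cU(\cN_2)$, making sure that (a) the singular set is preserved exactly, not just up to isotopy, (b) the choices of canonical coordinates at adjacent vertices are compatible along the connecting edge, and (c) in the twisted case the construction is equivariant under the deck transformation so that it descends. I would expect to handle (a)--(b) by first matching orientations of the local coordinate systems along $\cN$ (as in the proof of Proposition~\ref{prop:TypeII_semilocal}(iii)), then using that the space of germs of orbital equivalences between two fixed local models is connected, so the local solutions can be deformed to agree on overlaps; the flow-box/isotopy-extension theorem then produces the global map. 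This mirrors the strategy used for Proposition~\ref{prop:invariantTypeII}, now in the orbital (rather than isomorphism) category, where the absence of the period/frequency invariants actually makes the gluing \emph{easier} than in the isomorphism case, since there is no continuous modulus to match.
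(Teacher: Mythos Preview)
Your proposal is correct and follows the approach the paper intends: the paper's own proof is the single sentence ``It follows easily from Proposition~\ref{prop:TypeIII_1},'' and your Steps~1--3 are precisely a fleshing-out of that line, using the graph structure of $\cN$ from Proposition~\ref{prop:TypeIII_1}(i), the foliation of $\cU(\cN)\setminus\cN$ by invariant circles established in the proof of Proposition~\ref{prop:TypeIII_1}(ii), and the local normal forms at the vertices. Your identification of the gluing as the only real work, and your observation that in the orbital category there are no continuous moduli to match (so the gluing is easier than in Proposition~\ref{prop:invariantTypeII}), is exactly the point.
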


\begin{proof}
 It follows easily from Proposition \ref{prop:TypeIII_1}.
\end{proof}

\subsection{The period cocycle}

In order to classify singular level sets of Type III semi-locally, we need an additional invariant, which is the cohomology class
of the period cocycle defined below.  The construction is similar to the one introduced in \cite{DufourMolino-Class2dim1994}
for obtaining symplectic invariants of Hamiltonian systems, though our situation is more complicated.

Let us fix a semilocal first integral $F$ in $\cU(N)$  with lowest multiplicity, as given by Assertion ii) of Proposition \ref{prop:TypeIII_1}.
According to Proposition \ref{prop:TypeIII_local}, for each singular point $p_i \in N$ of Type III, 
we can choose a local canonical coordinate system $(x_i,y_i)$ in which  the vector field $X$ has the form
\begin{equation} \label{eqn:NFIII_p_i}
 X =  h_i (x_i^{a_i} y_i^{b_i}) (\frac{x_i}{a_i}\frac{\partial}{\partial y_i} -\frac{y_i}{b_i}\frac{\partial}{\partial x_i})
\end{equation}

Take an edge $E$ in $N$. For simplicity, let us assume for the moment that $E$ does not contain singular points of Type II.
To fix the notations, assume that $p_i$ (resp. $p_j$) is the limit of the points of 
$E$ by the flow of the vector field $X$ in the negative (resp. positive) time direction, and that
near $p_i$ we have $E \supset \{y_i=0, x_i > 0 \}$ and   near $p_j$ we have $E \supset \{y_j=0, x_j  < 0 \}$. 
Denote by $A_E = \{x_i = 1\}$ and $B_E = \{x_j = -1\}$ the two local curves (in two local coordinate systems)
given by these equations. Then the flow of $X$ will take
each point of $A_E$ to a point of $B_E$ after some time. So we get a time function  for going from a point of $A_E$ to $B_E$
by $X$, which may be viewed as a function on $A_E$ (this function may admit any value, positive or negative).
Since    the multiplicity of $F$ at $E$ is equal to $m_E = m_{i-} = m_{j-}$, time function
we can view this function as a function of $F$, which is a-priori not regular in $F$ but regular in $F^{1/m_E}$. 
(In the analytic case, it would be a Puiseux series in $F$; in the smooth case we can still talk about its Puiseux series).
Let us denote this local function of one variable by $P_E$, i.e. the value of the function at each point $q \in A_E$ is $P_E(F(q))$. 
If $E$ contains points of Type II, then $P_E$ can be defined in the same way, by jumping over the points of Type II,
like we did in \cite{ZungMinh-Action2012} and in the previous subsection for the definition of monodromy.

\begin{figure}[htb]  \label{figure:PeriodCocycle}
\begin{center}
\includegraphics[width=80mm]{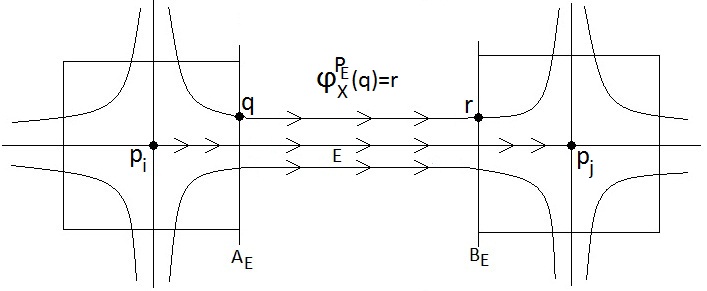}
\caption{Component $P_E$ of the period cocycle}
\end{center}
\end{figure}

Thus, for each   $E$ we get a function $P_E$. 
The family $(P_E, \ E\ \text{is an edge of}\ N)$ will be called the 
{\bf period cocycle}. It is easy to see that this period cocycle is arbitrary, i.e. any family of local 
functions $(P_E, \ E\ \text{is an edge of}\ N)$, where each $P_E(z)$ is a smooth function in $z^{1/m_E},$
can be realized by a smooth integrable system, by the gluing method. However, this cocycle 
is not an invariant of the system, because it depends on
the choice of $F$ and of local canonical coordinates. 
In order to get an invariant, we have to take its equivalence class with respect to a natural 
equivalence relation generated by 2 kinds of  operations: changing a canonical
coordinate system by another canonical coordinate system, and changing the first integral $F$ by another first integral.
Changing $F$ by another first integral (which has the same multiplicity at the components of $N$ as $F$) 
simply leads to the left equivalence (in the sense of left equivalence of maps), while changing local coordinates
leads to a cohomology class. So our invariant is the left equivalence class of a cohomology class.

Assume, for example, that the coordinate system $(x_1,y_1)$ in a neighborhood of
$p_1$ is replaced by another canonical coordinate system $(x_1',y_1')$ in the same neighborhood. According 
to Proposition \ref{prop:TypeIII_local}, we have
\begin{equation}
 x_1' = x_1 \rho^{\pm}(x_1^{a_1}y_1^{b_1}), y_1' = y_1 \theta^{\pm}(x_1^{a_1}y_1^{b_1})
\end{equation}
for some \emph{multi-branched} smooth functions $\rho^{\pm}, \theta^{\pm}$ such that $\rho(0) \neq 0, \theta(0) \neq 0.$ 
Here, a multi-branched smooth function is a finite family of functions (for example $\rho^+$ and $\rho^-$) 
which coincide up to a flat term at the point in question, i.e. all the branches have the same Taylor series (so that they can be glued together
to become a smooth function on a non-separated manifold or a Reeb graph). Denote by 
$E_1 = \{y=0, x >0\},  E_2 = \{y=0, x <0\}, E_3 = \{y >0, x = 0\}, E_4 = \{y <0, x = 0\}$ the four local edges of
$N$ having $p_1$ as a vertex. Then the functions $P_{E_1},\hdots,P_{E_4}$ will be changed by the following
rule under the above change of coordinates:
\begin{equation} \label{eqn:change_of_coordinates}
\left\{ 
 \begin{array}{lll}
 P'_{E_1} (F) & = &P_{E_1}(F) + \xi^+(F) \\
 P'_{E_2} (F) &= &P_{E_2}(F) + \xi^-(F) \\
 P'_{E_3} (F) &= &P_{E_3}(F) + \zeta^+(F) \\
 P'_{E_4} (F) &= &P_{E_4}(F)  + \zeta^-(F) 
\end{array}
\right.
\end{equation}
where $\xi^+(F)$ and $\xi^-(F)$  are  multi-branched functions which are smooth in $F^{1/k_1}$ 
and have the same formal expansion, 
where $k_1$ is the natural number such that  $F$ has the same multiplicity at the 
local edges near $p_1$ as the function $x_1^{k_1a_1}y_1^{k_1b_1}$
(i.e. the multiplicity $m_{E_1}$ of $F$ at $E_1$ is equal to $k_1b_1$ and so on), and the same holds true for
$\zeta^+(F)$ and $\zeta^-(F)$. Besides the fact that 
$\xi^\pm (F)$ and $\zeta^\pm (F)$ must be regular in $F^{1/k_1}$, they can be chosen arbitrarily (i.e. we can choose the corresponding
multi-branched functions $\rho^\pm$ and $\theta^\pm$ in order to get the desired functions $\xi$ and $\zeta$).
In other words, we have a {\bf coboundary} of the type
\begin{equation} \label{eqn:Period_Coboundary}
\left\{ 
 \begin{array}{lllll}
 P_{E_1}  & = & \xi^+(F) &= &\hat{\xi}^+(F^{1/k_1}) \\
 P_{E_2}  &= & \xi^-(F) &= &\hat{\xi}^-(F^{1/k_1}) \\
 P_{E_3}  &= & \zeta^+(F)& =& \hat{\zeta}^+(F^{1/k_1}) \\
 P_{E_4}  &= & \zeta^-(F) &=& \hat{\zeta}^-(F^{1/k_1}) 
\end{array}
\right.
\end{equation}
($\hat{\xi}^\pm$ and $\hat{\zeta}^\pm$ are smooth functions; the other components are zero), 
and these are the generators of the space of coboundaries which can be obtained
by changes of coordinates.

\begin{remark}
 A-priori, each component $P_E$ of the cocycle is only regular in $F^{1/m_E}$, why the
coboundaries have a higher level of regularity (each ``elementary'' coboundary is regular in some
$F^{1/k_i}$). Due to this fact, if $P_E \neq 0$ for some edge $E$ we cannot 
in general substract from our cocycle a coboundary so that $P_E$ becomes 0.
\end{remark}

\begin{defn}
 The class of the  period cocycle $(P_E, \ E\ \text{is an edge of}\ N)$ 
in the quotient space of the linear space of all period cocycles 
by the linear space of all coboundaries (generated by the coboundaries given by 
Formula \eqref{eqn:Period_Coboundary}) is called its {\bf cohomology class}.
\end{defn}

Similarly to \cite{DufourMolino-Class2dim1994,San_Focus2003}, since the coboundaries can be multi-branched,
we can use them to kill all the flat terms in the cocycles, i.e. any two cocycles which are the same up to
a flat term are cohomologic. Thus the cohomological class of the cocycle $(P_E, \ E\ \text{is an edge of}\ N)$  depends
only on its asymptotic expansion. This asymptotic expansion is a (multi-dimensional) {\bf Puiseux series} in $F$ (where $F$
is viewed as a local coordinate function on the Reeb graph $\cB$). The cohomology class itself
can be expressed in terms of a Puiseux series with a family of coefficients equal to 0 (those which can be eliminated
by a normalizing coboundary).

The frequency functions $h_i(x_i^{a_i} y_i^{b_i})$ in Formula \eqref{eqn:NFIII_p_i} 
near $p_i$ will also be viewed as functions
of $F$. Similarly to the peiod cocycle, these frequency functions are not regular in $F$, 
but regular in a fractional power of $F$,
and they are determined by the system only up to a flat term, 
so we will also retain only the Puiseux series of these functions in $F$.

\begin{remark}
 In complex analysis there is a problem of multi-valuedness of Puiseux series, but we don't have this problem here
with our systems on real manifolds, because by $F^{1/k}$ of course we mean the unique real $k$-th root of $F$ if $k$ is odd, 
and if $k$ is even (in which case $F$ must also be positive) we usually mean the unique positive real root.
\end{remark}

\begin{thm}
 Let $(X_1,\cF_1)$ and $(X_2,\cF_2)$ be two smooth  integrable systems with two
respective nondegenerate singular level sets $N_1$ and $N_2$ of type III. Then these two systems are semi-locally isomorphic, i.e. 
there is a smooth diffeomorphism from a neighborhood of $N_1$ to a neighborhood of $N_2$ which sends $X_1$
to $X_2$ if and only if they are semi-locally orbitally equivalent and satisfy the following additional condition: there
is a local coordinate function $F_1$ (resp. $F_2$) on the local base space $\cB_{\cU(N_1)}$ (resp. $\cB_{\cU(N_2)}$)
such that the Puiseux series in $F_1$ of the frequency functions and the 
cohomology class of the period cocycle  of $X_1$ coincide with the Puiseux series in $F_2$ of the corresponding 
frequency functions and the the cohomology class of the period cocycle  of $X_2$.
\end{thm}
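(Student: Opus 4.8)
The plan is to prove both implications, the forward (necessity) direction being essentially bookkeeping with the equivalence relations already set up, and the converse (sufficiency) direction requiring a gluing construction along $N$. For necessity, suppose $\Phi:\cU(N_1)\to\cU(N_2)$ is a smooth diffeomorphism with $\Phi_*X_1=X_2$. Then $\Phi$ is in particular a semi-local orbital equivalence, and it descends to a diffeomorphism $\bar\Phi$ of Reeb graphs; pulling back a coordinate function $F_2$ on $\cB_{\cU(N_2)}$ gives a coordinate function $F_1=F_2\circ\bar\Phi$, which is where the left-equivalence freedom (change of first integral) enters. At each Type III vertex $p_i\in N_1$ and its image $\Phi(p_i)$, $\Phi$ carries a canonical coordinate system at $p_i$ to one at $\Phi(p_i)$ only up to the automorphisms of the linear toric generator described in Proposition \ref{eqn:preservesX1}; by the computation leading to Formula \eqref{eqn:change_of_coordinates}, these act on the period cocycle by the coboundaries \eqref{eqn:Period_Coboundary} and fix the frequency functions modulo flat terms. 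Hence the Puiseux series in $F_1$ of the frequency functions and the cohomology class of the period cocycle of $X_1$ are carried exactly to the corresponding data of $X_2$ in $F_2$.

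For sufficiency, assume semi-local orbital equivalence and the coincidence of all the Puiseux data. Fix a homeomorphism realizing the orbital equivalence (so the Reeb graphs, the Type II vertices, and the eigenvalue ratios $a_i:b_i$ at Type III vertices match), together with coordinate functions $F_1,F_2$ realizing the assumed coincidence, and canonical coordinate systems at corresponding Type III vertices as in Proposition \ref{prop:TypeIII_1} and Proposition \ref{prop:TypeIII_local}. \emph{Step 1 (vertices).} Since the frequency functions $h_i^{(1)},h_i^{(2)}$ have the same Taylor series in the appropriate fractional power of $F$, Proposition \ref{prop:TypeIII_local}(2) — whose smooth case rests on Sternberg--Chen — provides a local smooth isomorphism between $X_1$ and $X_2$ near each Type III vertex; the Type II vertices are handled, as in Proposition \ref{prop:TypeII_semilocal} and \cite{ZungMinh-Action2012}, by the reflection involutions. \emph{Step 2 (edges).} On each edge $E$ the transverse flows of $X_1$ and $X_2$ are conjugate, and the obstruction to gluing the vertex isomorphisms compatibly along $E$ by a flow-box coordinate is precisely the difference $P_E^{(1)}-P_E^{(2)}$ of period cocycles; since the cohomology classes agree, after adjusting the canonical coordinates at the vertices by the coboundary moves \eqref{eqn:Period_Coboundary} we may assume $P_E^{(1)}=P_E^{(2)}$ up to a flat function for every edge $E$.

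\emph{Step 3 (flat terms).} Using the fact, noted after the definition of the cohomology class, that multi-branched coboundaries realize arbitrary flat corrections, absorb the remaining flat differences in both the period cocycle and the frequency functions; this yields vertex coordinate systems and a first-integral identification for which the local normal forms of $X_1$ and $X_2$ literally coincide near every vertex and the time-of-flight data coincide along every edge. \emph{Step 4 (assembly).} The local isomorphisms near the vertices, together with the flow of $X$ carrying one flow-box to the other along each edge, then patch to a globally defined smooth diffeomorphism $\Phi:\cU(N_1)\to\cU(N_2)$ with $\Phi_*X_1=X_2$; compactness of $N$ keeps the set of vertices and edges finite, so the patching is a finite induction, and twisted neighborhoods are reduced to the non-twisted case by passing to a double cover exactly as in Proposition \ref{prop:invariantTypeII}.

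The main obstacle is Steps 3--4: controlling flat terms while gluing. The subtlety flagged in the Remark after \eqref{eqn:Period_Coboundary} is that cocycle components are only regular in $F^{1/m_E}$ whereas elementary coboundaries are regular in some $F^{1/k_i}$, so one cannot normalize each $P_E$ to zero; one must argue that once the cohomology classes match, the honest difference of cocycles equals a genuine coboundary plus a flat term, and that this flat term can be removed without disturbing the normalizations already achieved at the other vertices and edges. This requires a careful partition-of-unity (Borel-type) argument along $N$ that respects the branching structure of the Reeb graph and the finitely many multi-branch discrepancies at the Type III vertices.
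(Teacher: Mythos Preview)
Your proposal is correct and follows essentially the same route the paper has in mind: the paper's own proof consists of the single sentence ``The proof is straightforward, based on the above discussions and the standard gluing and extension methods, like in, e.g., \cite{Vey-Isochore1979,DufourMolino-Class2dim1994,DullinSan_Symplectic2007,San_Focus2003},'' and your Steps 1--4 are precisely an unpacking of those ``above discussions'' (Propositions \ref{eqn:preservesX1}, \ref{prop:TypeIII_local}, \ref{prop:TypeIII_1} and the coboundary formulas \eqref{eqn:change_of_coordinates}--\eqref{eqn:Period_Coboundary}) combined with the gluing scheme from the cited references. Your explicit flagging of the flat-term issue in Steps 3--4 is a point the paper absorbs into the word ``straightforward'' together with the remark preceding the definition of the cohomology class; the Borel-type argument you sketch is indeed what is implicitly being invoked there.
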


\begin{proof}
The proof is straightforward, based on the above discussions and the standard gluing and extension methods, 
like in, e.g., \cite{Vey-Isochore1979,DufourMolino-Class2dim1994,DullinSan_Symplectic2007,San_Focus2003}.
\end{proof}

\section{Generic nilpotent singularities (Type IV)}

\subsection{Local normal form}

Assume that $X(p) = 0$ at some point $p$, and that the linear part of $X$ at $p$ is non-zero nilpotent,
i.e. is has Jordan form $X^{(1)} = y\frac{\partial}{\partial x}$. According to a classical result of Takens
\cite{Takens-NF1974}, there exists a formal coordinate system in which $X$ can be formally written as
\begin{equation}X = \big(y + x^2f(x)\big)\frac{\partial}{\partial x} + x^2g(x)\frac{\partial}{\partial y},
 \end{equation}
where $f$ and $g$ are formal series.

Let us assume, moreover, that $dF(p) \neq 0$ for some $F \in \cF$, i.e.
$F$ is a regular function at $p$. Then, according to a theorem of Gong \cite{Gong-NF1995}, 
in the above expression of $X$ one can put $g(x)= 0$, i.e. $X$ can be formally written as 
\begin{equation}X = \big(y + x^2f(x)\big)\frac{\partial}{\partial x},
 \end{equation}
and $F$ is a function of $y$.

We will say that the nilpotent singularity of $(X,\cF)$ at $p$ is \emph{generic}, if $f(0) \neq 0$ in the
above formal normal form. Another equivalent definition of genericity, without using Takens-Gong normal form,
is as follows:
\begin{defn} \label{defn:Generic_Nilpotent}
Let $p$ be a nilpotent singular point of an integrable system $(X, \cF)$. Then we will say that $p$ is
a {\bf generic} nilpotent singular point if the following conditions are satisfied:

i) There exists a local smooth first integral $F$ such that $dF(p)\neq 0$.

ii) There is a local smooth coordinate system $(x,y)$ in which $F$ is a function of $y$, and 
\begin{equation}\frac{\partial^2}{\partial x^2}X := \big[\frac{\partial}{\partial x},[\frac{\partial}{\partial x},X]\big] \neq 0.
 \end{equation}
\end{defn}
Let $p$ be a generic nilpotent singularity of $(X, F)$. Then there is a coordinate system $(x,y)$ in a neighborhood 
$\cU$ of $p$ such that $y$ is a local first integral of $X$, and the linear part of $X$ is in Jordan form, i.e.
\begin{equation}X = \big(y + G(x,y)\big)\frac{\partial}{\partial x},
 \end{equation}
where $G(x,y)$ is a smooth function such that
\begin{equation}G(0,0) = \frac{\partial G}{\partial x}(0,0)= \frac{\partial G}{\partial y}(0,0) =0.
 \end{equation}
Moreover, according to our assumptions,
\begin{equation}\frac{\partial^2 G}{\partial x^2}(0,0) \neq 0.
 \end{equation}
According to the implicit function theorem, for each $x$ near 0, there is a unique $y = \gamma(x)$ such that
\begin{equation}G\big(x,\gamma(x)\big)+\gamma(x)=0.
 \end{equation}
Moreover, the function $x \mapsto \gamma(x)$ is a smooth function such that $\displaystyle \gamma(0) =0, \frac{d\gamma}{dx}(0) =0$ 
but $\displaystyle \frac{d^2 \gamma}{dx^2}(0) \neq 0$, i.e. $\gamma$ has Morse singularity at 0.

The points $(x,y)=\big(x,\gamma(x)\big)$ in the neighborhood $\cU$ of $p$ are precisely those points at which $X$
vanishes. By Morse theorem, using a smooth change of coordinates, we may assume that the curve
\begin{equation}\cS = \{X=0 \}\cap  \cU = \{(x,y)\in \cU \ |\ y = \gamma(x)\}
 \end{equation}
is the standard parabolic curve $\{ y = x^2 \}$, i.e. $\gamma(x) = x^2$. So we have
\begin{equation}X = \widetilde G(x,y)\frac{\partial}{\partial x},
 \end{equation}
where $\widetilde G = y + G(x,y)$ vanishes on the curve $\{ y = x^2 \}$.
It means that $\widetilde G$ is divisible by the function $y -x^2$, i.e. we can write
\begin{equation}X = g(x,y).(y-x^2)\frac{\partial}{\partial x},
 \end{equation}
where $g(x,y)$ is a smooth function such that $g(0,0) \neq 0$.

Notice that $X$ has eigenvalue equal to 
\begin{equation}-2xg(x,x^2)
 \end{equation}
at each point $(x,x^2) \in \cS$. The function $x[g(x,x^2) -g(-x,x^2)]$
is an even function in $x$, and so can be considered a smooth function in $x^2$ which vanishes at 0.
Denote by $g_1$ a smooth function of one variable such that
\begin{equation}xg_1(x^2) = [g(x,x^2) -g(-x,x^2)]/2.
 \end{equation}
Similarly, there is a smooth function $g_0$ such that
\begin{equation}g_0(x^2) = [g(x,x^2) +g(-x,x^2)]/2.
 \end{equation}
Put 
\begin{equation}X_1 = \big(g_0(y)+xg_1(y)\big)(y-x^2)\frac{\partial}{\partial x}.
 \end{equation}
Then $(X_1,y)$ is also a local smooth integrable system with a generic nilpotent singularity at $p$,
and moreover the set $\{ X_1=0 \}$ locally coincides with the set $\{ X=0 \}$,
and $X_1$ has the same eigenvalue as $X$ at every point of this set.

\begin{prop}\label{prop:nilpotent1}
With the above notations, the vector fields
\begin{equation}X = g(x,y).(y-x^2)\frac{\partial}{\partial x}
 \end{equation}
and 
\begin{equation}X_1 = \big(g_0(y)+xg_1(y)\big)(y-x^2)\frac{\partial}{\partial x}
\end{equation}
are locally smoothly isomorphic. More precisely, there is a smooth local diffeomorphism
$\Phi : (\cU,0) \to (\cU,0)$, which preserves the coordinate $y$ and such that $\Phi_*X = X_1$.
\end{prop}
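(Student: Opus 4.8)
The plan is to use the path (Moser) method, taking advantage of the fact that, by construction, $X$ and $X_1$ have the \emph{same} coefficient function along the zero curve $\cS=\{y=x^2\}$. Indeed $g_0(x^2)+xg_1(x^2)=g(x,x^2)$, so the smooth function $g(x,y)-g_0(y)-xg_1(y)$ vanishes on $\cS$; since $y-x^2$ is a defining function for $\cS$ with nowhere-vanishing differential, the smooth division lemma produces a smooth function $r(x,y)$ with $g(x,y)-g_0(y)-xg_1(y)=(y-x^2)\,r(x,y)$, hence
\begin{equation}
X-X_1=(y-x^2)^2\,r(x,y)\,\frac{\partial}{\partial x}.
\end{equation}
Thus $X-X_1$ vanishes to \emph{second} order on $\cS$, one order more than $X$ and $X_1$ themselves (equivalently, this forces their eigenvalues along $\cS$ to agree).

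Then I would connect $X$ to $X_1$ by the segment $X_t:=X+t(X_1-X)=\widetilde A_t(x,y)\,(y-x^2)\,\frac{\partial}{\partial x}$ for $t\in[0,1]$, where $\widetilde A_t=g(x,y)-t(y-x^2)r(x,y)$ satisfies $\widetilde A_0=g$, $\widetilde A_1=g_0(y)+xg_1(y)$ and $\widetilde A_t(0,0)=g(0,0)\neq0$, so after shrinking $\cU$ we may assume $\widetilde A_t\neq0$ on $\cU$ for all $t\in[0,1]$. I look for a time-dependent vector field $Y_t$ generating a family of diffeomorphisms $\Phi_t$ with $\Phi_0=\mathrm{id}$ and $\Phi_t^{\,*}X_t=X$ for all $t$, which amounts to solving the homological equation $\partial X_t/\partial t=[X_t,Y_t]$. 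Since $\partial X_t/\partial t=-(y-x^2)^2 r\,\partial/\partial x$ is divisible by $(y-x^2)^2$ while the coefficient of $X_t$ is divisible by $(y-x^2)$, the natural ansatz is $Y_t=(y-x^2)\,c_t(x,y)\,\partial/\partial x$ (this has no $\partial/\partial y$ component, so its flow will preserve $y$, and it vanishes on $\cS$). With this ansatz a direct bracket computation reduces the homological equation to the first-order \emph{linear} equation $\widetilde A_t\,\partial_x c_t-c_t\,\partial_x\widetilde A_t=-r$, i.e. $\partial_x(c_t/\widetilde A_t)=-r/(\widetilde A_t)^2$; as $\widetilde A_t$ is nowhere zero this has the smooth solution $c_t(x,y)=-\widetilde A_t(x,y)\int_0^x r(s,y)\,\widetilde A_t(s,y)^{-2}\,ds$. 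The point is that the would-be singularity of the homological equation along $\cS$ (where $X_t=0$) is cancelled exactly by the second-order vanishing of $X-X_1$.

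To finish, $Y_t$ is smooth, depends smoothly on $t\in[0,1]$, and $Y_t(0,0)=0$; hence its flow $\Phi_t$ is defined for $t\in[0,1]$ on a (possibly smaller) neighborhood of $0$, which I rename $\cU$, and $\Phi_1:(\cU,0)\to(\cU,0)$ is a diffeomorphism that fixes $0$, fixes $\cS$ pointwise (because $Y_t|_{\cS}=0$) and preserves the coordinate $y$. Since $Y_t$ generates $\Phi_t$, the identity $\partial X_t/\partial t=[X_t,Y_t]$ is equivalent to $\frac{d}{dt}(\Phi_t^{\,*}X_t)=0$, so $\Phi_t^{\,*}X_t\equiv\Phi_0^{\,*}X_0=X$; evaluating at $t=1$ gives $(\Phi_1)_*X=X_1$, and $\Phi:=\Phi_1$ is the required diffeomorphism.

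I expect the only genuine difficulty to be exactly the smoothness of the conjugation across the zero curve $\cS$, and it is resolved by the extra order of vanishing of $X-X_1$ on $\cS$, which is in turn the reason the functions $g_0,g_1$ were defined as they were. The remaining verifications — the smooth division lemma, the bracket computation reducing the homological equation, the standard identity $\frac{d}{dt}\Phi_t^{\,*}X_t=\Phi_t^{\,*}\!\big([Y_t,X_t]+\partial X_t/\partial t\big)$, and the fact that finitely many shrinkings of $\cU$ keep all estimates uniform in $t\in[0,1]$ — are routine.
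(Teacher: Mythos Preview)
Your proof is correct and follows essentially the same approach as the paper: Moser's path method along the linear segment joining $X$ and $X_1$, the key observation that $X-X_1$ vanishes to second order on $\cS$ (because $g(x,x^2)=g_0(x^2)+xg_1(x^2)$), the ansatz $Y_t=(y-x^2)c_t\,\partial/\partial x$ for the generator, and the reduction of the homological equation to $\partial_x(c_t/\widetilde A_t)=-r/\widetilde A_t^2$ solved by the explicit integral. The only cosmetic difference is that the paper parametrizes the path in the opposite direction, writing $X_t=tX+(1-t)X_1$ with coefficient $\rho_t=tg+(1-t)(g_0+xg_1)$.
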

\begin{proof}
We will use Moser's path method \cite{Moser-VolumeElement}. 
See Appendix A1 of \cite{DufourZung-Poisson2005} for an introduction to this
method. Take the following path of vector fields:
\begin{equation}
X_t = tX + (1-t)X_1 = \rho_t(x,y).(y-x^2)\frac{\partial}{\partial x},
\end{equation}
where $\rho_t = tg+(1-t)(g_0+xg_1)$.

The main point is to show the existence of a time-dependent vector field
\begin{equation}Z_t = \varphi_t(x,y)(y-x^2)\frac{\partial}{\partial x}
 \end{equation}
such that 
\begin{equation}-\cL_{Z_t}X_t = [X_t,Z_t] = \frac{d}{dt}X_t = X_1 - X.
\end{equation}

If $Z_t$ exists, then its time-1 flow will be the required local diffeomorphism which moves $X$ to $X_1$.
Thus we have to solve the equation
\begin{equation}\label{eq:1flow}
\big[\rho_t(x,y).(y-x^2)\frac{\partial}{\partial x}, \varphi_t(x,y)(y-x^2)\frac{\partial}{\partial x}\big]=X_1 - X.
\end{equation}

Notice that $X_1 - X$ vanishes up to the second order on the curve
$\cS = \{(x,y) \in \cU \ |\ y = x^2\}$, i.e. we can write
\begin{equation}X_1 - X = h(x,y).(y-x^2)\frac{\partial}{\partial x}.
\end{equation}

Then Equation \eqref{eq:1flow} is equivalent to 
\begin{equation}\rho_t \varphi_t' - \varphi_t \rho_t'= h
 \end{equation}
where the apostrophe means derivation by $x$. This last equation is equivalent to
\begin{equation}\big(\frac{\varphi_t}{\rho_t}\big)' = \frac{h}{\rho_t^2},
 \end{equation}
which admits a smooth solution
\begin{equation}\varphi_t(x,y)= \rho_t(x,y)\int_0^x \frac{h(s,y)}{\rho_t^2(s,y)} ds.
 \end{equation}
The proposition is proved.
\end{proof}
Proposition \ref{prop:nilpotent1} means that any generic nilpotent singularity of a smooth integrable
system $(X,F)$ has the following smooth normal form:
\begin{equation}X = \big(g_0(y)+xg_1(y)\big)(y-x^2)\frac{\partial}{\partial x},
 \end{equation}
where $g_0(0)\neq 0$ (one can fix $g_0(0)=1$ if one wishes).

Notice that the eigenvalues of the singular point 
$(x,y) = (x,x^2) = (\pm \sqrt y,y)$ of $X$ is 
\begin{equation}\mp 2\sqrt y\big(g_0(y)\pm \sqrt y g_1(y)\big) = -2x\big(g_0(x^2)+xg_1(x^2)\big).
 \end{equation}
Of course, these eigenvalues depend only on $X$ and on the parametrization of the local singular curve 
$\cS = \{X =0\}$ by the coordinate $x$ (such that locally $\cS = \{(x,x^2) \in \bbR^2 \ |\ x \in \bbR\}$).

If $\{(x_1,y_1 =x_1^2) \ |\ x_1 \in \bbR\}$ is another parametrization of $\cS$ in another smooth coordinate
system $(x_1,y_1)$, then there is an odd function $\psi$ such that $\psi(0) =0, \psi'(0) \neq 0, 
\psi(-x) =-\psi(x)$ and 
\begin{equation}x_1 = \psi(x), y_1 = \psi(\sqrt y)^2 = \psi(x)^2 \text{ on } \cS.
 \end{equation}
Thus the function 
\begin{equation}E: x \mapsto -2x\big(g_0(x^2)+xg_1(x^2)\big),
 \end{equation}
considered up to composition by reversible odd functions (i.e. $E$ is equivalent to $E\circ \psi$ for any $\psi$ odd reversible) 
is a local invariant of $X$ at $p$. We will call the equivalence $E \sim E\circ \psi$ the {\bf odd left equivalence} class of $E$.

The local function
\begin{equation} \label{eqn:E_function}
E(x)= -2x\big(g_0(x^2)+xg_1(x^2)\big)
 \end{equation}
(considered up to odd left equivalence) will be called the {\bf eigenvalue function} or also the {\bf frequency function} 
of $X$ at $p$. According to the above discussion
and Proposition \ref{prop:nilpotent1}, this eigenvalue function is the full invariant of $(X,F)$ 
at the generic nilpotent singular point $p$. In other words, we have proved the following theorem:

\begin{thm} \label{thm:nilpotent-NF}
1) Let $p$ be a generic nilpotent singularity of a smooth integrable system $(X,\cF)$. Then there is a local smooth coordinate
system $(x,y)$ in a neighborhood $\cU$ of $p$ in which $X$ has the following normal form:
\begin{equation}X = \big(g_0(y)+xg_1(y)\big)(y-x^2)\frac{\partial}{\partial x},
\end{equation}
where $g_0, g_1$ are two smooth functions and $g(0) \neq 0$.

2) If $\displaystyle \widetilde X = \big(\widetilde g_0(y_1)+x_1\widetilde g_1(y_1)\big)(y_1-x_1^2)\frac{\partial}{\partial x_1}$
is another generic nilpotent integrable smooth vector field in another smooth local normal form, then $X$ and $\widetilde X$ 
are locally smoothly isomorphic if and only if their respective eigenvalue functions $E(x)= -2x\big(g_0(x^2)+xg_1(x^2)\big)$ and 
$\widetilde E(x_1)= -2x_1\big(\widetilde g_0(x_1^2)+x_1\widetilde g_1(x_1^2)\big)$
are locally odd left equivalent, i.e. there is a local smooth odd reversible function $\psi$ such that $\widetilde E\big(\psi(x)\big) = E(x)$.
\end{thm}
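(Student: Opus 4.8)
The plan is to assemble the two assertions from the material already developed. For part 1), the normal form $X = \big(g_0(y)+xg_1(y)\big)(y-x^2)\frac{\partial}{\partial x}$ is precisely the endpoint of the chain of reductions carried out before the statement of Proposition \ref{prop:nilpotent1}: starting from Takens--Gong one puts $X$ in Jordan form with $y$ a first integral, then uses the implicit function theorem and Morse's lemma to straighten the singular curve $\cS = \{X=0\}$ to the parabola $\{y=x^2\}$ (this is legitimate because $\gamma$ has a Morse singularity at $0$), which forces $X = g(x,y)(y-x^2)\frac{\partial}{\partial x}$ with $g(0,0)\neq 0$; finally Proposition \ref{prop:nilpotent1}, via Moser's path method, replaces $g(x,y)$ by $g_0(y)+xg_1(y)$ up to a diffeomorphism preserving $y$. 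So part 1) is just a matter of citing Proposition \ref{prop:nilpotent1} together with the preceding normalization steps.

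For part 2), necessity is the easy direction: a local isomorphism $\Phi$ with $\Phi_*X=\widetilde X$ carries the singular set $\cS=\{X=0\}$ onto $\widetilde\cS=\{\widetilde X=0\}$ and is conjugation-equivariant on the linearizations, hence preserves eigenvalues; since both singular curves are parametrized by the $x$- (resp. $x_1$-) coordinate via $t\mapsto(t,t^2)$, the induced map on the parameter is a local diffeomorphism $\psi$ of $(\bbR,0)$, and it must be odd because $\Phi$ must respect the $\bbZ_2$-symmetry $(x,y)\mapsto(-x,y)$ of the parabola $\{y=x^2\}$ (the two points $(\pm\sqrt y,y)$ lying over a given $y\in\cF$). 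Matching eigenvalues at corresponding points gives $\widetilde E(\psi(x)) = E(x)$, exactly the odd left equivalence.

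For sufficiency, suppose $\widetilde E(\psi(x)) = E(x)$ for some odd reversible $\psi$. The strategy is: first apply the coordinate change $(x,y)\mapsto(x_1,y_1)=(\psi(x),\psi(x)^2)$—more precisely extend $\psi$ to a diffeomorphism of a neighborhood of $0$ in $\bbR^2$ preserving the parabola and the fibration by $\{y=\mathrm{const}\}$, using $y_1 = \psi(\sqrt y)^2$ where $\sqrt y$ makes sense on each half; this reduces us to the case $E = \widetilde E$, i.e. $-2x(g_0(x^2)+xg_1(x^2)) = -2x(\widetilde g_0(x^2)+x\widetilde g_1(x^2))$, which by separating even and odd parts in $x$ forces $g_0 = \widetilde g_0$ and $g_1 = \widetilde g_1$ \emph{on $\cS$}, i.e. as functions of $y$. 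But in the normal form of part 1) the functions $g_0,g_1$ are functions of $y$ only, so $X$ and $\widetilde X$ already coincide. (If one only knows $g_0,g_1$ and $\widetilde g_0,\widetilde g_1$ agree to the extent of producing the same $E$, this argument shows they agree identically, since the map $(g_0,g_1)\mapsto E$ is injective: $E$ determines $g_0(x^2)$ as $-\tfrac12(E(x)-E(-x))/(2x)$... wait, $-\tfrac12\cdot\tfrac{E(x)+ (-E(-x))}{2x}$; in any case the even and odd parts of $E(x)/(-2x)$ recover $g_0(x^2)$ and $xg_1(x^2)$.)

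The main obstacle I expect is the sufficiency direction: one must check that an odd reversible $\psi$ on the parameter line genuinely extends to a \emph{smooth} diffeomorphism of a plane neighborhood that preserves both the normal form structure (the $\frac{\partial}{\partial x}$-direction up to the relevant factor) and the first integral $y$, without destroying smoothness when passing through $x=0$ where $\sqrt y$ is only continuous. The resolution is that $\psi$ odd means $\psi(x) = x\,\alpha(x^2)$ with $\alpha$ smooth and $\alpha(0)\neq 0$, so $y_1 := \psi(\sqrt y)^2 = y\,\alpha(y)^2$ is a genuinely smooth function of $y$ alone, and the map $(x,y)\mapsto(x\alpha(y),y\alpha(y)^2)$—or a suitable variant—is the desired smooth diffeomorphism; one then verifies directly, as in the conversion step of the proof of Proposition \ref{prop:TypeIII_local}, that it transports $X$ to $\widetilde X$.
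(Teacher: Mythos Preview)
Your argument tracks the paper's own route closely: Part 1) is indeed exactly the chain of reductions leading to Proposition \ref{prop:nilpotent1}, and for Part 2) the paper likewise extracts the eigenvalue function from the normal form, observes that a reparametrization of the singular parabola by another canonical coordinate system composes $E$ with an odd diffeomorphism $\psi$, and then declares $E$ (mod odd left equivalence) to be the full invariant by appeal to Proposition \ref{prop:nilpotent1}. So the strategy is right.

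There is, however, a genuine gap in your sufficiency step. After your coordinate change $(x,y)\mapsto(x\alpha(y),\,y\alpha(y)^2)$, you correctly obtain a new normal form whose eigenvalue function agrees with $\widetilde E$; separating even and odd parts then gives $G_0=\widetilde g_0$ and $G_1=\widetilde g_1$ \emph{only for $y\geq 0$}, since the eigenvalue function sees $g_0,g_1$ only through their restriction to $\{y=x^2\}\subset\{y\geq 0\}$. Your sentence ``the functions $g_0,g_1$ are functions of $y$ only, so $X$ and $\widetilde X$ already coincide'' does not follow: the extensions to $y<0$ need not agree, so the transported $X$ and $\widetilde X$ are not literally equal. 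What closes the gap is a second application of Proposition \ref{prop:nilpotent1}: since $G_i-\widetilde g_i$ vanishes for $y\geq 0$, the factor $(G_0-\widetilde g_0)(y)+x(G_1-\widetilde g_1)(y)$ vanishes on the parabola $\{y=x^2\}$ and is therefore divisible by $y-x^2$, so the difference of the two vector fields vanishes to second order on $\cS$ and the Moser path argument applies verbatim. This is what the paper's phrase ``according to \ldots\ Proposition \ref{prop:nilpotent1}'' is really invoking for sufficiency.

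A minor point on necessity: $\Phi$ does not literally respect the coordinate involution $(x,y)\mapsto(-x,y)$; what it does respect is the first-integral foliation (because $\Phi^*y_1$ is a first integral of $X$, hence a function of $y$), and that is what forces $\psi(x)^2$ to depend only on $x^2$, hence $\psi$ odd. Your parenthetical about the two points over a given $y$ is the right idea, but the sentence as written overstates what $\Phi$ preserves.
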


The eigenvalue function $E(x)$ in Formula \eqref{eqn:E_function} and Theorem \ref{thm:nilpotent-NF}
is a smooth function, but it is not very convenient for the semi-local and global study, because the variable $x$
is not a first integral of the system. So instead of $E(x)$ we will consider the function
\begin{equation} \label{eqn:DoubleValuedEigenvalue}
 F(y) = - 2 \sqrt{y} (g_0(y) + \sqrt{y} g_1 (y)).
\end{equation}
This function $F(y)$ is double-valued and not smooth in $y$ (it is smooth only in $\sqrt{y}$), but since $y$ is a first integral
in the local normal form, we can project $F$ to a double-valued function on the local base space. $F$ will be called
the {\bf double-valued eigenvalue function}.  Another advantage of $F$ is that, instead of odd left equivalence,
we now have usual left equivalence, i.e. for any local smooth diffeomorphism $\phi: (\bbR_+,0) \to (\bbR_+,0),$
$F$ is equivalent to $F \circ \phi.$ The second part of Theorem \ref{thm:nilpotent-NF} can be now restated as follows:

\begin{thm}
Two singularities of Type IV are smoothly locally isomorphic if and only if their corresponding double-valued eigenvalue
functions are left equivalent. 
\end{thm}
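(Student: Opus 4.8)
The plan is to deduce this statement from part~2) of Theorem~\ref{thm:nilpotent-NF}, so that all that remains is the purely function-theoretic claim: for two generic nilpotent normal forms, the eigenvalue functions $E_i(x)=-2xg_0^{(i)}(x^2)-2x^2g_1^{(i)}(x^2)$ are odd left equivalent if and only if the double-valued functions $F_i(y)=-2\sqrt y\,g_0^{(i)}(y)-2y\,g_1^{(i)}(y)$, whose two branches over $y>0$ are $E_i(\sqrt y)$ and $E_i(-\sqrt y)$, are left equivalent by a germ of smooth diffeomorphism $\phi$ of $(\bbR_+,0)$, i.e.\ $\{E_1(\sqrt y),E_1(-\sqrt y)\}=\{E_2(\sqrt{\phi(y)}),E_2(-\sqrt{\phi(y)})\}$ for all small $y>0$. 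The elementary tool is that a germ of smooth diffeomorphism $\phi$ of $(\bbR_+,0)$ can be written $\phi(y)=yu(y)$ with $u$ smooth and $u(0)=\phi'(0)>0$, so that $\psi(x):=\operatorname{sgn}(x)\sqrt{\phi(x^2)}=x\sqrt{u(x^2)}$ is a germ of smooth odd reversible diffeomorphism of $(\bbR,0)$ with $\psi(\sqrt y)^2=\phi(y)$; conversely $y\mapsto\psi(\sqrt y)^2$ produces such a $\phi$ from any odd reversible $\psi$.

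For the forward implication, if $E_1=E_2\circ\psi$ for a germ of smooth odd reversible $\psi$, put $\phi(y)=\psi(\sqrt y)^2$. By oddness, $E_1(\pm\sqrt y)=E_2(\pm\psi(\sqrt y))$, and since $\{\psi(\sqrt y),-\psi(\sqrt y)\}=\{\sqrt{\phi(y)},-\sqrt{\phi(y)}\}$ we get $\{E_1(\sqrt y),E_1(-\sqrt y)\}=\{E_2(\sqrt{\phi(y)}),E_2(-\sqrt{\phi(y)})\}$, i.e.\ $F_1=F_2\circ\phi$. For the reverse implication, suppose $F_1=F_2\circ\phi$. Because $E_i(\sqrt y)-E_i(-\sqrt y)=-4\sqrt y\,g_0^{(i)}(y)\neq 0$ for small $y>0$ (here $g_0^{(i)}(0)\neq0$ enters), the matching of unordered pairs is realized, for each small $y>0$, by one of two bijections of branches, and by continuity it is the same one for all small $y>0$: either $E_1(\sqrt y)=E_2(\sqrt{\phi(y)})$ and $E_1(-\sqrt y)=E_2(-\sqrt{\phi(y)})$, or the swapped identity holds. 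In the first case set $\psi(x)=\operatorname{sgn}(x)\sqrt{\phi(x^2)}$ and in the second $\psi(x)=-\operatorname{sgn}(x)\sqrt{\phi(x^2)}$; each is a germ of smooth odd reversible diffeomorphism, and a direct check separately for $x>0$, $x<0$ and $x=0$ (using $E_i(0)=0$) yields $E_1=E_2\circ\psi$. Invoking Theorem~\ref{thm:nilpotent-NF} then completes the proof.

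The one genuinely delicate point I expect is the reverse implication: one must be sure that matching the \emph{unordered} branch pairs in $F_1=F_2\circ\phi$ forces the \emph{signed} identity $E_1=E_2\circ\psi$ after possibly replacing $\psi$ by $-\psi$, and this is exactly where one needs the non-vanishing of $g_0$ on the singular curve, which keeps the two branches of $F$ separated for $y\neq0$. The smoothness of $\psi(x)=x\sqrt{u(x^2)}$ at the origin, and the translation between the $(x,y)$-picture and the $\sqrt y$-picture on the local base space, are routine.
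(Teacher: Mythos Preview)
Your proposal is correct and follows exactly the route the paper intends: the paper presents this theorem as a direct restatement of part~2) of Theorem~\ref{thm:nilpotent-NF} and gives no separate proof, leaving the passage between odd left equivalence of $E$ and left equivalence of the double-valued $F$ implicit. You have supplied precisely that passage, and your handling of the one nontrivial point---using $g_0^{(i)}(0)\neq 0$ to separate the two branches and force a consistent choice of sign for $\psi$---is the right way to close the argument.
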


\subsection{Complexification and regularized monodromy}

Consider a generic nilpotent smooth integrable vector field $X$ in normal form:
\begin{equation}X = \big(g_0(y)+xg_1(y)\big)(y-x^2)\frac{\partial}{\partial x}.
 \end{equation}
\begin{figure}[htb] 
\begin{center}
\includegraphics[width=70mm]{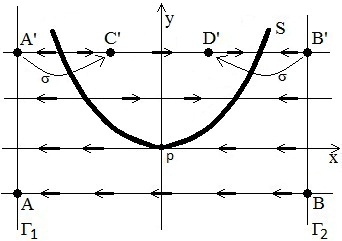}
\caption{Type IV singular point}
\label{fig:nil-singularities}
\end{center}
\end{figure}

Fix two local curves $\Gamma_1$ and $\Gamma_2$ which are transversal to the lines $\{y = constant\}$, and which 
lie on the two different sides of the nilpotent singular point $p$ in the above coordinate system $(x,y)$. For example, one can take 
$\Gamma_1 = \{x = -c\}$ and $\Gamma_2 = \{x = c\}$ for some small positive constant $c$.

For each point $A \in \Gamma_1$ there is a unique point $B \in \Gamma_2$ such that $y(A) = y(B)$, i.e. $A$ and $B$ lie
on the same local smooth invariant curve of $X$.

If $y(A) < 0$ then locally the vector field $X$ is non-singular on the invariant curve $\{ y = y(A)\}$, and so there is 
a unique time $T$ (if $g_0(0)> 0$ then $T<0$, and if $g_0(0)< 0$ then $T>0$) such that the time-$T$ flow $\varphi_X^T$ 
of $X$ moves $A$ to $B$
\begin{equation}\varphi_X^T (A) = B.
\end{equation}
Of course, $T$ depends on the choice of  $\Gamma_1, \Gamma_2$ and the value of $y = y(A)$, so we can write it as a function
of $y$:
\begin{equation}T = T_{\Gamma_1, \Gamma_2}(y).
\end{equation}
It is clear that 
\begin{equation}\lim_{y \to 0^-} T_{\Gamma_1, \Gamma_2}(y) = \infty
\end{equation}
because $X(p) = 0$, thus the function $T_{\Gamma_1, \Gamma_2}$ is singular at $y = 0$.
We want to regularize this function, i.e. write it as
\begin{equation}T_{\Gamma_1, \Gamma_2}(y) = \cS(y) + \widehat T_{\Gamma_1, \Gamma_2}(y)
\end{equation}
where $\widehat T_{\Gamma_1, \Gamma_2}(y)$ is a smooth function, and $\cS$ is a singular function which
does not depend on the choice of $\Gamma_1$ and $\Gamma_2$.

If $y = y(A)> 0$ then the invariant curve $\{ y = y(A)\}$ contains two singular points $(\pm \sqrt y, y)$, and we can't even go from $B$
to $A$ by the flow of $X$. But, as was shown in \cite{Zung-Nondegenerate2012} and in Subsection
\ref{subsection:semilocal_II}, 
we can go from $B$ to $A$ using the flow $\varphi_X$ by ``jumping over the walls'' as follow:
\begin{figure}[htb] 
\begin{center}
\includegraphics[width=60mm]{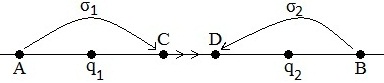}
\caption{Jumping over the walls.}
\label{fig:jumping}
\end{center}
\end{figure}

Denote by $q_1$ and $q_2$ the two nondegenerate hyperbolic singular points with eigenvalue 0 on the invariant curve $\{ y = y(A) = y(B)\}$.
Denote by $\sigma_1$ and $\sigma_2$ the two reflection maps associated to  $q_1$ and $q_2$ respectively. 
Put $C = \sigma_1(A), D = \sigma_2(B)$,
and define the regularized time function $\widehat T_{\Gamma_1, \Gamma_2}(y)$  when $h>0$ by the formula
\begin{equation} \label{eqn:Time_IVa}
\varphi_X^{\widehat T_{\Gamma_1, \Gamma_2}(y)}(C) = D.
\end{equation}
We will show that the singular function $\cS(y)$ can be chosen in such a way that the regularized 
time function $\widehat T_{\Gamma_1, \Gamma_2}(y)$
for $y>0$ agrees with  $\widehat T_{\Gamma_1, \Gamma_2}(y)$ for $y<0$ to become together a smooth function of $y$ at $y =0$.

For simplicity, let us first look at the analytic case, i.e. the case when the functions $g_0$ and $g_1$ in the normal form
\begin{equation}X = \big(g_0(y)+xg_1(y)\big)(y-x^2)\frac{\partial}{\partial x}
\end{equation}
are real analytic functions. In this analytic case, we can use the complexification method to study the regularized time function.
 
 By complexification, the analytic vector field in a neighborhood of 0 in $\bbC^2$. On each local invariant complex line 
 $\cC_y = \{y = \text{ constant }\}$, $X$ vanishes at exactly two points $q_{1,2} = \pm \sqrt y$. In the complex line $\cC_y$, we
 can go from $A$ to $B$ by a path $\gamma$ which avoids $q_1$ and $q_2$ as shown in the Figure \ref{fig:Goingpath}.
\begin{figure}[htb] 
\begin{center}
\includegraphics[width=60mm]{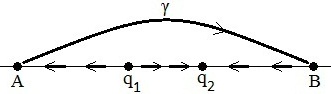}
\caption{Going from $A$ to $B$ using complex path $\gamma$.}
\label{fig:Goingpath}
\end{center}
\end{figure}

Then we can put $T_{\Gamma_1, \Gamma_2}^\bbC(y)$ equal to the complex time to go by $\varphi_X$ along $\gamma$ from $A$ to $B$.
The formula for $T_{\Gamma_1, \Gamma_2}^\bbC(y)$ is:
\begin{equation}T_{\Gamma_1, \Gamma_2}^\bbC(y) = \int_\gamma \frac{dx}{\big(g_0(y)+xg_1(y)\big)(y-x^2)}.
 \end{equation}
Of course, the above formula depends only on the homotopy class of $\gamma$. When $y \in \bbR, y> 0$, we can also imagine $\gamma$ 
as in Figure \ref{fig:Goinghalf-circle}:
\begin{figure}[htb] 
\begin{center}
\includegraphics[width=60mm]{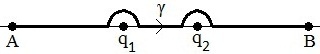}
\caption{Going half-circle around $q_1$ and $q_2$.}
\label{fig:Goinghalf-circle}
\end{center}
\end{figure}

The above path $\gamma$ consist of 3 pieces of real time (on the real path from $A$ to $B$) and two half-circles of pure imaginary time
(going half-circle around $q_1$ and $q_2$). The total real time is actually equal to $\widehat T_{\Gamma_1, \Gamma_2}(y)$,
while the imaginary time for going half-circle around each $q_i$ is equal to $-i\pi$ divided by the eigenvalue of $X$ at $q_i$. Thus we have
\begin{equation}
\widehat T_{\Gamma_1, \Gamma_2}(y) = \Re\big(T_{\Gamma_1, \Gamma_2}^\bbC(y)\big)
\end{equation}
where $\Re$ denotes the real part.

Similarly, when $y \in \bbR_-$, we can imagine the path $\gamma$ as in Figure \ref{fig:Goingaroundq1}.
\begin{figure}[htb] 
\begin{center}
\includegraphics[width=60mm]{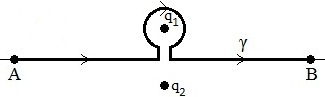}
\caption{Going around $q_1$.}
\label{fig:Goingaroundq1}
\end{center}
\end{figure}

The time for going from $A$ to $B$ along $\gamma$ is equal to the time going from $A$ to $B$ along the real path 
plus the time for going around the singular point $q_1 = (i\sqrt{-y},y)$ in the negative direction of the complex plane.
The time for going around $q_1$ in the positive direction is equal to $2\pi\sqrt{-1}$ divided by the eigenvalue of $X$
at $q_1$, so it is a complex number whose real part is (for $y < 0$):
\begin{equation} \label{eqn:SingularPart_IV}
 S(y) = \frac{\pi \sqrt{-y} g_0(y)}{-y(g_0^2(y) - y g_1^2(y))}.
\end{equation}
Thus we can put, for $y \leq 0$:
\begin{equation} \label{eqn:Time_IVb}
 \widehat{T}_{\Gamma_1,\Gamma_2}(y) = {T}_{\Gamma_1,\Gamma_2}(y) - \frac{\pi \sqrt{-y} g_0(y)}{-y(g_0^2(y) - y g_1^2(y))}.
\end{equation}
Formula \eqref{eqn:Time_IVb} makes sense also in the smooth non-analytic case, and together with Formula \eqref{eqn:Time_IVa}
gives us a local smooth function in $y$. This local smooth function given by Formula  \eqref{eqn:Time_IVa} for $y > 0$
and by Formula \eqref{eqn:Time_IVb} for $y \leq 0$ is called the {\bf regularized time function} for going
from $\Gamma_1$ to $\Gamma_2$ by the flow of $X$.

The time function $ \widehat{T}_{\Gamma_1,\Gamma_2}$ depends on $\Gamma_1$ and $\Gamma_2$. In order to make it into something
independent of $\Gamma_1$ and $\Gamma_2$, we go back by the flow of $X$ from $ \Gamma_2$ to $\Gamma_1$, not in the same way, but
in the other way ``around the globe'' to make a loop (a closed level set). 

Assume, for simplicity,  that the level set $\cN$ of the Type IV singular point $p$  does not
contain any singular point of Type III, and it does not contain any other Type IV singular point either, 
and moreover the neighborhood $N$  in the surface $\Sigma$ is orientable. Then, 
similarly to the case of Type II level sets, it is easy to see that  $\cN$  is a regular level set of the associated fibration. Denote by
\begin{equation}
R_{\Gamma_2,\Gamma_1} (y)
\end{equation}
the time function for returning from $\Gamma_2$ to $\Gamma_1$ ``by going around the globe'', i.e. not by the previous path $\gamma$, but by the
complementary path in each level set. Of course, if that complementary path crosses some Type II singular points, 
then we will jump over them as we did before, and the time function is still well defined. The function
\begin{equation}
 \widehat{M}(y) = \widehat{T}_{\Gamma_1,\Gamma_2}(y) + R_{\Gamma_2,\Gamma_1} (y)
\end{equation}
is called the {\bf regularized monodromy function} near the level set $\cN$.

Observe that, when $y > 0$ then $\widehat{M}(y) = M(y)$ is the usual monodromy of a regular level set with Type II singular points, 
so the function $\widehat{M}(y)$ for 
$y \geq 0$ is, up to left equivalence, a semi-local invariant of the system. On the other hand, similarly to the case of Type III level sets,
$\widehat{M}(y)$ for $y < 0$ considered up to left equivalence is NOT an invariant, only its Taylor series at $y=0$ is. But this Taylor
series is also determined by $\widehat{M}(y)$ for $y > 0,$ so we can actually forget about $\widehat{M}(y)$ for $y < 0$ in the semi-local
classification of Type IV level sets. The function $\widehat{M}(y)$ for $y \geq 0$ will be called the {\bf truncated monodromy function}
(we truncated the part with $y < 0$). Summarizing, we have:

\begin{thm}
Let $p$ be a Type 4 singular point of a smooth integrable system $(X,\cF)$ on a compact surface $\Sigma$, 
such that the level set $\cN$ of $p$ does ot contain any other singular point of Type III or Type IV, and
the surface $\Sigma$ is orientable near $\cN$. Then $\cN$ is a regular level set of the associated fibration. Moreover, 
the local eigenvalue functions in a neighborhood $\cU(N)$ of $N$ and the truncated monodromy function in 
$\cU(N)$, considered as functions on the local base space andup to simultaneous local left equivalence, 
classify $(X,\cF)$  up to semi-local smooth isomorphisms.
\end{thm}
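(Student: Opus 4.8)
The plan is to mirror the two-step pattern already used for Type II and Type III level sets (Propositions \ref{prop:TypeII_semilocal} and \ref{prop:invariantTypeII}, and the theorem on the period cocycle): first pin down the topology of $\cN$ together with the regularity statement, then prove completeness of the listed invariants by combining the local normal forms of Theorem \ref{thm:nilpotent-NF} with a semi-local gluing argument. For the regularity I would argue as in Assertions i) and iii) of Proposition \ref{prop:TypeII_semilocal}: let $\cK$ be the smallest closed $X$-invariant set containing $p$ and closed under adjoining any orbit whose closure it meets; non-flatness of a global first integral forces only finitely many singular points on $\cK$, so $\cK$ is a finite graph, and since its singular points other than $p$ are of Type II (valency $2$) while $p$ is, in the normal form of Theorem \ref{thm:nilpotent-NF}, the common smooth endpoint of exactly the two half-orbits $\{y=0,\ \pm x>0\}$, the graph $\cK$ is a smooth circle equal to $\cN$. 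Triviality of the holonomy of the induced foliation on $\cU(\cN)$ follows, as in the proof of Proposition \ref{prop:TypeII_semilocal} iii), by iterating a hypothetical nontrivial holonomy and contradicting non-flatness of a global first integral at the accumulation point; hence $\cU(\cN)$ fibers smoothly over an interval, and since $y$ is a regular first integral near $p$ and a regular first integral exists near every Type II point of $\cN$, these patch to an $F\in\cF$ regular on all of $\cN$.

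Necessity is immediate: a semi-local isomorphism $\Phi$ with $\Phi_*X_1=X_2$ carries singular points to singular points with the same eigenvalues, so $p_1\mapsto p_2$ and $\cS_1\mapsto\cS_2$, yielding the (odd) left equivalence of the local eigenvalue functions; since the reflections $\sigma_i$ at the Type II points and the transversals $\Gamma_1,\Gamma_2$ may be chosen $\Phi$-equivariantly, and since the singular part $S(y)$ of the regularization depends only on the already-matched eigenvalue data via Formula \eqref{eqn:SingularPart_IV}, $\Phi$ intertwines the entire construction of $\widehat M$, and the base diffeomorphism induced by $\Phi$ realizes the left equivalence of the truncated monodromy functions.

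For sufficiency, after replacing $F_2$ by its composition with a base diffeomorphism I may assume that for $y\ge 0$ the eigenvalue functions and the truncated monodromy functions of the two systems coincide and that they agree to infinite order at $y=0$; this single base change is what encodes the word ``simultaneous'' in the statement. I would then construct $\Phi$ in three stages. First, use the local normal form of Theorem \ref{thm:nilpotent-NF} at $p_1,p_2$ and Propositions \ref{prop:TypeII_semilocal} and \ref{prop:TypeII} at the Type II points to obtain $\Phi$ on disjoint neighborhoods of the singular points with $\Phi_*X_1=X_2$ there, \emph{insisting} that the coordinate ``$y$'' used at every singular point descend to one and the same fixed first integral $F$; this simultaneous normalization is where Moser's path method, exactly as in Proposition \ref{prop:nilpotent1}, is invoked once more to absorb the residual discrepancy between vector fields sharing the prescribed eigenvalue function and first integral. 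Second, extend $\Phi$ across the edges of $\cN$, where an orbit of $X_1$ must be sent to the matching orbit of $X_2$ with the correct $F$-value and the only freedom is an orbit-wise time shift. Third, observe that the obstruction to closing up this extension around $\cN$ compatibly with the already-fixed behaviour near the singular points and with the jumps $\sigma_i$ is precisely the difference of the regularized monodromy functions, which vanishes for $y\ge 0$ and is flat at $0$, hence is killed by a smooth orbit-wise reparametrization, producing a genuine smooth $\Phi$ on a full neighborhood of $\cN$ with $\Phi_*X_1=X_2$. The remaining extension and flat-killing steps are the standard gluing arguments of \cite{Vey-Isochore1979,DufourMolino-Class2dim1994,DullinSan_Symplectic2007,San_Focus2003}.

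I expect the main obstacle to be the transition across $y=0$: the neighbouring level sets change topological type (two extra Type II points for $y>0$, none for $y<0$) and the regularizing singular part $S(y)$ is of $\sqrt{-y}$-type for $y<0$, so one must verify that the chosen smooth extension to $y<0$ of the $y\ge 0$ monodromy data is actually realizable by a smooth orbit-wise reparametrization compatible with that $\sqrt{-y}$-type singularity of the raw time function — that is, that the coboundary-type freedom is wide enough to match the Puiseux data on the $y<0$ side, the same phenomenon already noted in the Remark following Formula \eqref{eqn:Period_Coboundary} for Type III.
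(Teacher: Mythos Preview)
Your proposal is correct and follows the same route as the paper. In fact the paper gives no separate proof: the theorem is introduced by ``Summarizing, we have:'' and is meant as a recapitulation of the preceding construction of $\widehat M$ together with the observation that $\widehat M(y)$ for $y\ge 0$ coincides with the ordinary Type~II monodromy and that the $y<0$ data reduces to its Taylor series at $0$; the regularity of $\cN$ is dismissed with ``similarly to the case of Type II level sets, it is easy to see\ldots'', and the gluing is left implicit by analogy with the Type~III theorem, whose proof in turn appeals to \cite{Vey-Isochore1979,DufourMolino-Class2dim1994,DullinSan_Symplectic2007,San_Focus2003}. Your three-stage sufficiency argument and your explicit handling of the $y=0$ transition are a legitimate and more detailed unpacking of exactly what the paper intends.
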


The case when $\cU(N)$ is no-orientable can be classified similarly: in this case it is the Taylor series of the regularized
monodromy function which is the continuous semi-local invariant (in additional to the local invariants).  The case when $\cN$
contains other singular points of Type IV and Type III is more complicated: in this case, instead of the regularized monodromy,
we have to talk about regularized period cocycles, similarly to the case of level sets of Type III. We don't want to go
into the details here.

\section{Global classification}

In order to obtain a global classification of integrable systems $(X,\cF),$ we just need to collect all the local and semi-local invariants. 
(There is no specifically global invariant, due to the fact that the base space of the associated singular fibration  has only 1 dimension 
and does not carry any additional structure on it besides the smooth structure). We can formulate the following classification theorems, 
whose proof is simply a combination of the results of the previous sections:

\begin{thm}
Two smooth integrable systems $(X_1,\cF_2)$ and $(X_2,\cF_2)$  on closed surfaces $\Sigma_1$ and $\Sigma_2$ respectively
are smoothly orbitally equivalent if any only if there is a homeomorphism $\Phi: \Sigma_1 \to \Sigma_2$ which is a bijection
when restricted to the sets of singular points of Type $k$ of $X_1$ and $X_2$ for each $k=$ I, II, III, IV, and such that for each
singular point $p \in \Sigma_1$ of Type III the ratio of the two eigenvalues of $X_1$ at $p_1$ is equal to the ratio of the
two eiganvalues of $X_2$ at $\Phi(p_1)$.
\end{thm}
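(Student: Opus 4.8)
The plan is to prove the two implications separately. The ``only if'' direction is a short remark, while essentially all of the work is in the ``if'' direction, which amounts to assembling the local and semi-local orbital classifications of Sections 2--4.

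For \emph{necessity}, suppose $\Psi_0 : \Sigma_1 \to \Sigma_2$ is a smooth orbital equivalence, so $\Psi_0$ is a homeomorphism carrying the singular set of $X_1$ onto that of $X_2$, with $\Psi_{0*}X_1 = g\,X_2$ for a smooth function $g$ not vanishing off the common singular set. At a singular point $p \in \Sigma_1$, put $q = \Psi_0(p)$; using that $X_2$ has nondegenerate or generic nilpotent linear behaviour at $q$ one gets $g(q) \neq 0$ and $D(\Psi_{0*}X_1)(q) = g(q)\,DX_2(q)$, so the eigenvalue pattern at $q$ is the one at $p$ rescaled by $g(q)$. Hence the type $k \in \{\mathrm{I},\mathrm{II},\mathrm{III},\mathrm{IV}\}$ is preserved, and for Type III both eigenvalues are scaled by the same $g(q)$, so their ratio is unchanged; thus $\Phi = \Psi_0$ works. (One may also invoke the local structural stability recorded in the Introduction.)

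For \emph{sufficiency}, given such a $\Phi$, I would not try to smooth $\Phi$ itself but use it only to transport discrete data. Since the base $\cB$ of the associated fibration is one-dimensional, the combinatorics of the fibration is rigid: the Reeb graph, decorated by the twisted/non-twisted type of a tubular neighbourhood of each special level set (those carrying a Type I, III or IV point, together with the Type II level sets treated in Subsection \ref{subsection:semilocal_II}) and by the eigenvalue ratios $-a:b$ at the Type III points, is determined by the homeomorphism type of $\Sigma$ together with the types and positions of the singular points. Hence $\Phi$ induces an isomorphism of the decorated Reeb graphs, and after an isotopy we may assume $\Phi$ carries each special level set of $X_1$ onto the corresponding one of $X_2$ and each regular edge of $\cB_1$ onto the corresponding edge of $\cB_2$. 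Covering $\Sigma_1$ and $\Sigma_2$ by matched saturated tubular neighbourhoods $\cU(\cN_\alpha)$ of the special level sets and regular ``tubes'' over the open edges, I would then build on each piece a smooth orbital equivalence realizing the corresponding part of this isomorphism: over a regular tube use triviality of the $\bbT^1$- or $\bbR$-fibration; over a Type I point use the linear center; over a Type II level set use Proposition \ref{prop:TypeII_semilocal} with the model $x\frac{\partial}{\partial x}$; over a Type III level set use the semi-local orbital classification coming from Propositions \ref{prop:TypeIII_local} and \ref{prop:TypeIII_1}, which needs exactly a homeomorphism preserving the Type II points and the ratios $-a:b$; and over a Type IV level set use Theorem \ref{thm:nilpotent-NF} and Proposition \ref{prop:nilpotent1}, observing that the eigenvalue function is irrelevant for orbital equivalence, so there is no continuous obstruction.

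The hard part will be patching these local orbital equivalences into a single global smooth $\Psi$. The overlaps are saturated annular or M\"obius regions carrying a regular $\bbT^1$-fibration, and on such a region two orbital equivalences differ by a fibration-preserving diffeomorphism that is isotopic to the identity through fibration-preserving diffeomorphisms; I would interpolate these isotopies with a partition of unity subordinate to the induced cover of the Reeb graph --- the standard gluing and extension device of Vey and Dufour-Molino-Toulet \cite{Vey-Isochore1979,DufourMolino-Class2dim1994}. The genuinely delicate points are the rigidity of the decorated Reeb graph in dimension one (used above), the reduction of the twisted cases to the non-twisted ones via double covers, and the treatment of level sets carrying several Type II points alongside a Type III or Type IV point, where the ``jumping over the walls'' reflections of Subsection \ref{subsection:semilocal_II} must be matched coherently all the way around the level set; once these are settled, the theorem is the assembly of the results of Sections 2--4.
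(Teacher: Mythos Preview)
Your strategy --- reduce to the local and semi-local orbital models of Sections~2--4 and glue along regular annuli via fibration-preserving isotopies over the Reeb graph --- is exactly what the paper intends; its own argument is the single sentence ``simply a combination of the results of the previous sections'', so everything you write is an honest attempt to fill in what the paper omits, and the gluing mechanism you sketch is the right one.

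There is, however, a genuine gap at the step you yourself flag as delicate: the assertion that the decorated Reeb graph ``is determined by the homeomorphism type of $\Sigma$ together with the types and positions of the singular points''. This fails already for Hamiltonian systems on $S^2$, where all Type~III ratios equal $-1:1$ and there are no Type~II or~IV points. Two Morse Hamiltonians can have identical numbers of maxima, saddles and minima yet non-isomorphic Reeb trees: with six extrema and four saddles one can realise both the caterpillar tree (internal vertices on a path) and the tree in which one internal vertex is adjacent to the other three. A homeomorphism of $S^2$ can send any finite configuration of marked points to any other, so the hypothesis of the theorem is satisfied, but the associated singular fibrations are topologically distinct and the systems are not orbitally equivalent. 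Hence from the bare hypothesis you cannot manufacture an isomorphism of decorated Reeb graphs, and without one your semi-local pieces have nothing coherent to be glued along.

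This is really an imprecision in the statement rather than a defect of your method: the semi-local orbital classification for Type~III level sets already requires the homeomorphism to carry $N_1$ onto $N_2$, and the global statement should analogously require $\Phi$ to send level sets to level sets, i.e.\ to be a \emph{topological} orbital equivalence. Under that intended reading your argument goes through; under the literal reading the ``if'' direction is false.
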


\begin{thm}
 Two smooth integrable systems $(X_1,\cF_1)$ and $(X_2,\cF_2)$  on closed surfaces $\Sigma_1$ and $\Sigma_2$ respectively
are smoothly isomorphic if any only if  there is a smooth diffeomorphism 
$\Phi: \Sigma_1 \to \Sigma_2$ which is a smooth orbital equivalence of the two systems, such that
the quotient map $\widehat{\Phi}: \cB_1 \to \cB_2$ from the quotient space $\cB_1$ to the quotient space
$\cB_2$ is a simultaneous left equivalence between the following objects of $(X_1,\cF_1)$ (considered as functions on $\cB_1$)
and the corresponding objects of $(X_2,\cF_2)$: the period functions and the monodromy functions (for regular level sets which
may contain Type II singular points, and also near Type I singular points), the eigenvalue functions
(for Type II singularities), the truncated monodromy functions (for simple Type IV level sets), the Taylor series of the regularized
monodromy function (for twisted Type IV level sets), the Puiseux series of the frequency functions 
and the cohomology classes of the period cocycles (for Type III level sets and also for 
mixed Type III - Type IV level sets).
\end{thm}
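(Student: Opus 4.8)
The plan is to prove the two directions separately. The forward implication is essentially tautological: if $\Phi_*X_1=X_2$, then $\Phi$ is in particular an orbital equivalence carrying zeros to zeros, and since the eigenvalues of a vector field at a zero are conjugation invariants, it respects the types I--IV and the eigenvalue ratios at Type III points, so it is a smooth orbital equivalence in the sense of the preceding theorem. Every continuous datum named in the statement --- the local period and frequency functions, the (truncated, regularized) monodromy functions, the Type II eigenvalue functions, and the period cocycles --- is manufactured solely from the orbits of $X$ (elapsed times of the real, or complexified, flow, the canonical reflections used to ``jump over the walls'', and the local eigenvalues), so it is transported by $\Phi$ onto the corresponding datum of the second system; passing to quotients, $\widehat\Phi$ is a simultaneous left equivalence of all of them, and $\Phi$ itself is the diffeomorphism required on the right-hand side.

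For the converse I would fix an orbital equivalence $\Phi$ whose quotient $\widehat\Phi$ realises all the stated left equivalences, and use $\widehat\Phi$ to identify $\cB_1,\cB_2$ with a single Reeb graph $\cB$; near each Type III and Type IV vertex I also transport a coordinate function so that the Puiseux- and Taylor-series matchings of the hypothesis become honest equalities over $\cB$. Then I choose a finite open cover $\{U_\alpha\}$ of the compact graph $\cB$ by small neighbourhoods of the vertices --- one per Type I, Type III and Type IV vertex --- together with neighbourhoods of the edges, so arranged that a vertex neighbourhood meets only the neighbourhoods of its incident edges and every nonempty intersection $U_\alpha\cap U_\beta$ is a disjoint union of connected regular level-set intervals. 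Over each $U_\alpha$ the hypotheses furnish a semi-local smooth isomorphism $\Psi_\alpha$ of the two systems covering $\mathrm{id}_{U_\alpha}$: over an edge neighbourhood from Proposition \ref{prop:invariantTypeII} and the remark following it; over an elliptic vertex from the Type I proposition of Section 2; over a Type III or mixed Type III--Type IV vertex from the semi-local classification theorem of Section 3; and over a simple, resp.\ twisted, Type IV vertex from the classification theorem of Section 4 (truncated, resp.\ Taylor series of the regularized, monodromy function). I also arrange the $\Psi_\alpha$ to induce one and the same eigenvalue- and cyclic-order-preserving matching of the Type II sections $\cS_i$ on overlaps --- possible since this is a finite combinatorial matching propagated along the connected graph.

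Next comes the patching. On a regular overlap $U_\alpha\cap U_\beta$ the composition $g_{\alpha\beta}:=\Psi_\beta^{-1}\circ\Psi_\alpha$ is an automorphism of $(X_1,\cF_1)$ covering the identity of the base and fixing each section $\cS_i$; such an automorphism preserves every fibre and, by the relevant normal forms of Section 2 together with Proposition \ref{prop:TypeII_semilocal}, acts along each orbit by the flow of $X_1$, so $g_{\alpha\beta}=\varphi_{X_1}^{\,c_{\alpha\beta}\circ proj_1}$ for a unique $c_{\alpha\beta}\in C^\infty(U_\alpha\cap U_\beta)$. Because flows of $X_1$ along a common orbit commute and add their times, $\{c_{\alpha\beta}\}$ is an additive \v{C}ech $1$-cocycle for the soft sheaf of smooth functions on $\cB$, so a smooth partition of unity on $\cB$ subordinate to $\{U_\alpha\}$ writes $c_{\alpha\beta}=d_\alpha-d_\beta$ with $d_\alpha\in C^\infty(U_\alpha)$. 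Since $d_\alpha\circ proj_1$ is a first integral of $X_1$, the time shift $\varphi_{X_1}^{\,d_\alpha\circ proj_1}$ is an automorphism of $(X_1,\cF_1)$ over all of $U_\alpha$ --- including across the vertex, where in canonical coordinates it is one of the substitutions of Proposition \ref{eqn:preservesX1}, its analogue for Type I, or Theorem \ref{thm:nilpotent-NF}; consequently $\Psi:=\Psi_\alpha\circ\varphi_{X_1}^{-d_\alpha\circ proj_1}$ does not depend on $\alpha$ on overlaps and patches to a global smooth diffeomorphism $\Psi:\Sigma_1\to\Sigma_2$ with $\Psi_*X_1=X_2$, i.e.\ a smooth isomorphism of $(X_1,\cF_1)$ with $(X_2,\cF_2)$.

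The main obstacle is not this cohomological assembly over the one-dimensional graph, which is routine once the cover is taken with connected regular overlaps and the $\cS_i$-matchings fixed coherently, but the semi-local step at the singular level sets: promoting equality of \emph{Puiseux} (or \emph{Taylor}) series to a genuine smooth semi-local isomorphism near Type III and Type IV vertices, and --- hardest of all --- handling the mixed Type III--Type IV level sets, where one must fuse the period-cocycle construction of Section 3 with the regularization of Section 4 into a regularized period cocycle, a case this paper only sketches. That is precisely where the multi-branched coboundaries of Section 3 (which absorb all flat terms), Whitney/Borel-type extension, and Sternberg--Chen smooth linearisation near hyperbolic points come in --- the ``standard gluing and extension methods'' alluded to in the statement.
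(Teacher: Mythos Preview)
Your argument is correct and is precisely the natural execution of what the paper leaves implicit: the paper's own proof is a single sentence (``simply a combination of the results of the previous sections'', together with the remark that there is no specifically global invariant because the base space is one-dimensional), and your \v{C}ech-cocycle gluing over the Reeb graph is the standard way to turn that sentence into a proof. Your identification of the genuine difficulty---the semi-local step at Type III and especially mixed Type III--IV level sets, where Puiseux/Taylor data must be promoted to actual smooth isomorphisms via multi-branched coboundaries and Sternberg--Chen---is exactly right, and is also where the paper itself stops short (``We don't want to go into the details here'').
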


\section{Hamiltonianization}

We say that a vector field $X$ on a manifold $M$ is {\bf Hamiltonianizable}, or that it admits a {\bf hamiltonianization}, if
there exists a function $H$ and a Poisson structure $\Pi$ on $M$ such that $X = X_H := dH \lrcorner \Pi.$ We will distinguish
the symplectic case (when $\Pi$ is nondegenerate) from the degenerate case (when $\Pi$ vanishes at some points).

\begin{thm} \label{thm:Hamilton_Symplectic}
Let $(X,\cF)$ be a weakly nondegenerate smooth integrable system on a compact surface $\Sigma$. Then $X$ is
Hamiltonianizable by a symplectic structure if and only if the following conditions are satisfied: \\
i) Every singular point of Type III is traceless, i.e. the sum of the two eigenvalues of $X$ at that point is 0, \\
ii) There is a global smooth coordinate function on the base space $\cB,$ \\
iii) The surface $\Sigma$ is orientable, \\
iv) $(X,\cF)$ does not contain singular points of Type II and Type IV.
\end{thm}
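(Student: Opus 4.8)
The plan is to prove both implications by examining each of the four types of singularities, using the semi-local normal forms established in Sections 2–4, and checking what a compatible symplectic form $\omega$ (with $i_X\omega = dH$ for some function $H$ with $d\omega = 0$) must look like near each type of level set, then globalizing.

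\emph{Necessity.} Suppose $X = X_H$ for a symplectic form $\omega$ on $\Sigma$. For (iii): a symplectic surface is orientable, since $\omega$ is a volume form. For (i): near a Type III point, in canonical coordinates $X = h(x^a y^b)(\tfrac{x}{a}\partial_x - \tfrac{y}{b}\partial_y)$ (rewriting the normal form \eqref{eqn:NF_III}), the divergence of $X$ with respect to the volume form $\omega$ must vanish, because the flow of a Hamiltonian vector field preserves $\omega$; computing the trace of the linearization at the singular point, and noting that the linear part of $X$ there has eigenvalues in ratio $-a:b$, the traceless condition $\mathrm{div}_\omega X(p)=0$ forces the sum of eigenvalues to be $0$. (Equivalently: $\mathrm{div}X$ with respect to \emph{any} volume form agrees at a zero of $X$ with the trace of the linearization, so this is independent of $\omega$.) For (iv): at a Type II point the linear part is $\gamma x\partial_x$, whose trace is $\gamma\neq 0$, so it cannot be a zero of a Hamiltonian vector field; at a Type IV point the linear part is the nonzero nilpotent $y\partial_x$ — but here the obstruction is subtler, since the nilpotent part is traceless. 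The point is that for a Hamiltonian vector field $X_H$, the singular set $\{X=0\}$ equals $\mathrm{Crit}(H)$, and near a Morse--Bott-type curve $\cS$ of zeros the function $H$ must be constant along $\cS$; combining with the Type IV normal form $X=(g_0(y)+xg_1(y))(y-x^2)\partial_x$ one finds $dH$ is a multiple of $dy$ while $i_X\omega$ involves $dy$ paired through $\omega$ with $\partial_x$, and a short computation shows $H$ would have to be singular (not smooth) across the parabola $\{y=x^2\}$, a contradiction — this is the technical heart of necessity. For (ii): on a regular part of the fibration, $H$ descends to a function on $\cB$; the action function (integral of $\omega$) gives a function on $\cB$ which, together with smoothness of $H$ across every type of vertex, forces the existence of a global smooth coordinate on $\cB$ — essentially because $H$ itself, being a first integral that is regular wherever $X$ has a nondegenerate-in-the-transverse-direction zero, serves as such a coordinate once we know Types II and IV are absent.

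\emph{Sufficiency.} Assume (i)–(iv). Since there are no Type II or IV points (iv), every singular level set is either regular (possibly near a Type I point) or of pure Type III, and the Reeb graph $\cB$ carries a global smooth coordinate function $F$ (ii). We construct $\omega$ by prescribing it semi-locally and patching. Away from singular points, choose an angle-type coordinate $\varphi$ along orbits so that $X = \partial_\varphi$ after rescaling by the period function; then $\omega = dF\wedge d\varphi$ (up to the period factor) is closed and has $X$ Hamiltonian with Hamiltonian $F$ composed with an antiderivative of the period. Near a Type I point, in canonical coordinates \eqref{eqn:Period1}, set $\omega = \tfrac{1}{2}f(x^2+y^2)\,dx\wedge dy$ — one checks $i_X\omega = \pi\,d(x^2+y^2)\cdot(\text{scalar})$, giving a smooth Hamiltonian, and $\omega$ is a genuine area form since $f(0)\neq 0$. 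Near a Type III level set, use that the point is traceless (i): the normal form becomes $X = h(x^a y^a)(x\partial_x - y\partial_y)/a$ (so $a=b$, hence $a=b=1$ by coprimality), i.e. $X = h(xy)(x\partial_x - y\partial_y)$, and $\omega = \tfrac{1}{h(xy)}\,dx\wedge dy$ makes $X$ Hamiltonian with $H$ a function of $xy$, smooth because $h(0)\neq0$; orientability (iii) ensures these local area forms can be chosen with consistent sign. The remaining task is to glue these semi-local symplectic forms across the whole surface: on overlaps two choices of $\omega$ differ by a closed $2$-form which is exact (being a top form on a region retracting onto a graph or annulus, with prescribed total integral matching via the global coordinate $F$), so a partition-of-unity / Moser-type argument — exactly as in \cite{DufourMolino-Class2dim1994,Vey-Isochore1979} — yields a global $\omega$ with $i_X\omega$ exact, hence $X = X_H$ globally.

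\emph{Main obstacle.} The delicate step is the gluing in the sufficiency direction: one must choose the semi-local Hamiltonians $H$ compatibly on overlaps (not just the forms $\omega$), which requires that the ``action'' increments around the Reeb graph are consistent — and this is precisely where hypothesis (ii), a \emph{global} smooth coordinate on $\cB$, is used, since it lets us take $H$ to be a fixed global function and only adjust $\omega$. On the necessity side, the hard point is ruling out Type IV: unlike Type II, the linearization is traceless, so the divergence obstruction fails, and one must instead argue via the incompatibility of the parabolic singular curve $\{y=x^2\}$ with the critical-set structure of a smooth Hamiltonian — I expect this to need the explicit normal form of Theorem \ref{thm:nilpotent-NF} together with the observation that $i_X\omega = dH$ forces $H$ to vanish to order exactly one on $\{X=0\}$ transversally, which clashes with the order-two vanishing of $y-x^2$ that $X$ itself exhibits.
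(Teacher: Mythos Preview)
Your overall strategy matches the paper's: the trace/divergence obstruction for necessity, local symplectic forms in canonical coordinates for sufficiency, then a gluing step. Two places where the paper is sharper than your proposal:

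\textbf{Necessity for Type IV.} You flag this as the ``technical heart of necessity'' and try to rule it out directly via the structure of $\mathrm{Crit}(H)$, but there is a one-line argument you are missing. In the normal form of Theorem~\ref{thm:nilpotent-NF} the zero set of $X$ near a Type IV point is the parabola $\{y=x^2\}$, and every point on it with $x\neq 0$ is a Type II singularity with nonzero eigenvalue $-2x(g_0(x^2)+xg_1(x^2))$. The trace argument already excludes Type II, so no Type IV point can occur either. (Your direct route can also be made to work: writing $\omega=\phi\,dx\wedge dy$ and $X=g(x,y)(y-x^2)\partial_x$, the equation $i_X\omega=dH$ forces $\partial_x H=0$, hence $H=H(y)$, and then $\phi=H'(y)/\big(g(x,y)(y-x^2)\big)$ cannot be smooth and nonvanishing near the origin. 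But your stated reason about ``order-two vanishing of $y-x^2$'' is inaccurate --- $y-x^2$ vanishes to order one along the parabola.)

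\textbf{Gluing in sufficiency.} You set up local forms with their own local Hamiltonians, then invoke a Moser-type argument and identify the compatibility of the $H$'s on overlaps as the main obstacle. The paper avoids this entirely by reversing the order: it first uses conditions (i)--(iii) together with Proposition~\ref{prop:TypeIII_1} to produce a \emph{single global} Morse first integral $H$ on $\Sigma$, and only then constructs local area forms $\omega_\alpha$ satisfying $i_X\omega_\alpha=dH$ in each canonical chart. Because this equation is linear in $\omega$, the partition-of-unity combination $\omega=\sum_\alpha\psi_\alpha\omega_\alpha$ automatically satisfies $i_X\omega=dH$ globally, and it is nondegenerate since all the $\omega_\alpha$ induce the same orientation. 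No Moser deformation and no cohomological matching of action increments is needed; you do arrive at ``take $H$ to be a fixed global function'' in your last paragraph, but that observation should be the starting point of the construction, not its resolution.
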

\begin{proof}
It is clear that the above conditions are necessary for $X$ to be Hamiltonianized, because symplectic manifolds are orientable,
symplectic vector fields have zero trace at singular points, and the Hamiltonian function of the system will project to a global
coordinate function on the base space. Let us show that these conditions are also sufficent.

Condition i) implies that  $X$ can be written as 
$X = g(xy)\big(x\frac{\partial}{\partial x}-y\frac{\partial}{\partial y}\big)$ near each singular point of Type III. 
According to Proposition \ref{prop:TypeIII_1}, 
near every hyperbolic level set $\cN$ (i.e. a level set which contains a singular point of Type III) there is a first integral $F_\cN$
without multiplicity at $\cN$, i.e. the order of vanishing of $F_\cN$ at every component of $\cN$ is 1, or in the other words, 
all singular points of $F$ in $\cU(\cN)$ are nondegenerate. According to local geometric linearization theorem , near an elliptic singular
point there is also a nondegenerate first integral of type $F = x^2 + y^2$ (see Section 2). Conditions ii) and iii) then implies 
that there is a global first integral $H$ which is a Morse function on $\Sigma.$

The symplectic form can be chosen locally in such a way that $X=X_H$, i.e. $X$ is the Hamiltonian vector field of the above 
Morse first integral $H$: 

a) In a neighborhood $\cU(p_i)$ of an elliptic singular point $p_i$ where 
$X = g(x^2+y^2)\big(x\frac{\partial}{\partial x}-y\frac{\partial}{\partial y}\big)$ and $H = h(x^2+y^2)$, put
\begin{equation}
w_i = -\frac{g(x^2+y^2)}{2h'(x^2+y^2)}dx\wedge dy.
\end{equation}
where $h'$ is the derived function of $h$.

b) In a neighborhood $\cU(q_j)$ of a hyperbolic singular point $q_j$ where 
$X = g(xy)\big(x\frac{\partial}{\partial x}-y\frac{\partial}{\partial y}\big)$ and $H = h(xy)$, put
\begin{equation}
w_j = -\frac{g(xy)}{h'(xy)}dx\wedge dy.
\end{equation}
c) In a neighborhood $\cU(r_k)$ of a regular point $r_k$ where 
$X = f(y)\frac{\partial}{\partial x}$ and $H = h(y)$ put
\begin{equation}
w_k = \frac{f(y)}{h'(y)}dx\wedge dy.
\end{equation}
We can choose the neighborhoods $\cU(p_i), \cU(q_j), \cU(r_k)$ so that 
they form a finite open covering of $\Sigma$, and the local canonical coordinate systems in them so that
the above symplectic form $w_i,w_j,w_k$ give the same orientation of $\Sigma$. Let
\begin{equation}
1 = \sum_i\psi_i + \sum_j\psi_j + \sum_k\psi_k
\end{equation}
be a partition of unity on $\Sigma$ such that $\psi_i \geq 0$ everywhere and $\psi_i = 0$ outside of $\cU(p_i)$,
and similarly for $\psi_j$ and $\psi_k$. Put 
\begin{equation}
w = \sum_i\psi_i w_i + \sum_j \psi_j w_j + \sum_k \psi_k w_k.
\end{equation}
Then $w$ is a global symplectic form on $\Sigma$, and we have 
\begin{equation}
X = X_H
\end{equation}
with respect to $w$ on $\Sigma$.
\end{proof}

\begin{thm} \label{thm:Hamilton_Poisson}
Let $(x,\cF)$ be a weakly nondegenerate smooth integrable system on a compact surface $\Sigma$. Then $X$ is
Hamiltonianizable by a Poisson structure if and only if the following conditions are satisfied:\\
i) Every singular point of Type III is traceless, i.e. the sum of the two eigenvalues of $X$ at that point is 0, \\
ii) There is a global smooth coordinate function on the base space $\cB,$ \\
iii) $\Sigma$ is orientable in the neighborhood of every level set.
\end{thm}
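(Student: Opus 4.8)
The plan is to run the same argument as for Theorem~\ref{thm:Hamilton_Symplectic}, the two new features being that the Poisson bivector is now allowed to degenerate along the singular curve of a Type~II point and the parabola of a Type~IV point, and that $\Sigma$ need only be orientable in a neighbourhood of each level set. Throughout I use the convention $X_H=dH\lrcorner\Pi$ from the statement, and the key elementary fact that in dimension $2$ \emph{every} bivector field automatically satisfies $[\Pi,\Pi]=0$, so ``Poisson structure'' just means ``smooth section of $\Lambda^2 T\Sigma$''.

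\textbf{Necessity.} Condition~(ii): if $X=X_H$ then $X(H)=\{H,H\}=0$, so $H\in\cF$ and $H$ descends to a smooth function on $\cB$; on a regular circle level set $X\neq0$ forces $\Pi\neq0$ and hence $dH\neq0$, so $H$ restricts to an honest coordinate on every edge of the Reeb graph, i.e. it is a global coordinate function. Condition~(i): near a Type~III point, in the canonical coordinates of Proposition~\ref{prop:TypeIII_local} one has $X=h(x^{a}y^{b})\bigl(\tfrac{x}{a}\partial_x-\tfrac{y}{b}\partial_y\bigr)$ with $h(0)\neq0$, and every first integral (hence $H$) is a function $\phi(x^{a}y^{b})$; writing $\Pi=\pi\,\partial_x\wedge\partial_y$ and solving $dH\lrcorner\Pi=X$ gives $\pi=-h(x^{a}y^{b})/\bigl(ab\,\phi'(x^{a}y^{b})\,x^{a-1}y^{b-1}\bigr)$, and since $h$ is a unit this is smooth only if $x^{a-1}y^{b-1}\phi'(x^{a}y^{b})$ is a unit, which forces $a=b=1$ (traceless, and $\phi'(0)\neq0$). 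Condition~(iii): if some level set $\cN$ had a M\"obius (non‑orientable) tubular neighbourhood, then the nearby level sets would double‑cover $\cN$ (as for twisted Type~II level sets in Proposition~\ref{prop:TypeII_semilocal}), so the induced smooth structure on the local base is that of a half‑line with coordinate $\ell^{2}$ where $\ell$ is the sign‑ambiguous normal coordinate; then every $F\in\cF$ is locally a smooth function of $\ell^{2}$, whence $dF|_{\cN}=0$. In particular $dH|_{\cN}=0$, so $X_H=dH\lrcorner\Pi$ vanishes identically on $\cN$, contradicting the fact that $X$ has only finitely many zeros on $\cN$; hence every level set has an orientable neighbourhood.

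\textbf{Sufficiency.} Using (ii), pick a global smooth coordinate function on $\cB$ and pull it back to $H\in\cF$; by (i) it has, locally, the forms $h(x^{2}+y^{2})$ near an elliptic point, $h(xy)$ with $h'(0)\neq0$ near a (now traceless) Type~III point, and $h(y)$ with $h'(0)\neq0$ near Type~II, Type~IV and regular points — (iii) being used here to guarantee the Type~II and Type~IV level sets are non‑twisted, so that these really are the relevant normal forms and $dH\neq0$ on them. Thus the set $\{dH=0\}$ consists of just the finitely many Type~I and Type~III points. On $\Sigma\setminus\{dH=0\}$, since $X$ is tangent to the level sets of $H$, there is a \emph{unique} bivector $\Pi$ with $dH\lrcorner\Pi=X$ (if $X=u\,\partial_x+v\,\partial_y$ in a chart with $\partial H/\partial y\neq0$, then $\Pi=-(\partial H/\partial y)^{-1}u\,\partial_x\wedge\partial_y$, independently of the chart), so $\Pi$ is a well‑defined smooth section of $\Lambda^{2}T\Sigma$ there \emph{with no patching required}. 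One then checks it extends smoothly across each critical point of $H$: near an elliptic point $\Pi=\tfrac{g(x^{2}+y^{2})}{2h'(x^{2}+y^{2})}\,\partial_x\wedge\partial_y$ and near a traceless Type~III point $\Pi=-\tfrac{g(xy)}{h'(xy)}\,\partial_x\wedge\partial_y$ (up to the sign convention these are the bivectors of parts a), b) of the proof of Theorem~\ref{thm:Hamilton_Symplectic}), both manifestly smooth because $g(0)\neq0$, $h'(0)\neq0$. Hence $\Pi$ is a smooth Poisson structure on $\Sigma$ with $X=X_H$; near a Type~II point it vanishes exactly along $\cS=\{x=0\}$, and near a Type~IV point exactly along $\{y=x^{2}\}$, which is precisely why these singularities are now admissible.

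\textbf{Expected main obstacle.} The genuinely new point compared with Theorem~\ref{thm:Hamilton_Symplectic} is the necessity of (iii): one must deduce the purely topological statement ``no level set is twisted'' from the mere existence of $\Pi$, and this rests on understanding the ring of first integrals near a Morse--Bott exceptional leaf (the vanishing $dF|_{\cN}=0$ above) — I expect the careful treatment of the various twisted configurations (twisted Type~II, Type~IV, and mixed Type~III) to be the most delicate part. A subsidiary point that must be spelled out is that the global coordinate function can be chosen of multiplicity one at every Type~III level set, which is what (ii) buys once (i) provides tracelessness; all the remaining local normal‑form computations and the extension of $\Pi$ across $\{dH=0\}$ are routine and parallel the symplectic case.
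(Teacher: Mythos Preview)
Your sufficiency argument takes a genuinely different route from the paper's. The paper constructs local Poisson bivectors in normal-form charts (including the Type~II and Type~IV formulae you also write down) and sums them via a partition of unity, exactly parallel to Theorem~\ref{thm:Hamilton_Symplectic}. You instead observe that on $\{dH\neq 0\}$ the equation $dH\lrcorner\Pi=X$ has a \emph{unique} solution (since $dH\lrcorner\colon\Lambda^2T_p\Sigma\to\ker dH_p$ is an isomorphism of one-dimensional spaces), so $\Pi$ is globally and canonically defined there with no gluing, and it remains only to check smooth extension across the finitely many critical points of $H$. This sidesteps the orientation bookkeeping that an averaging argument would otherwise require---a genuine gain here since $\Sigma$ need not be orientable---though in fact the two constructions coincide, as the local pieces the paper averages already agree on overlaps by your uniqueness observation. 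On the necessity side you go well beyond the paper, which simply says ``the proof is similar'' and offers no separate argument for (i)--(iii) in the Poisson setting; your mechanism for (iii) (a twisted level set forces $dH|_{\cN}=0$, hence $X_H|_{\cN}=0$) is exactly right for regular, Type~II and Type~IV level sets, and your flagged worry that twisted Type~III configurations (cf.\ Figure~\ref{figure:gcd}) need a separate treatment is apt---neither you nor the paper spells that case out.
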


\begin{remark}
The first two conditions in Theorem \ref{thm:Hamilton_Poisson} are the same as in Theorem \ref{thm:Hamilton_Symplectic},
but the last condition in  Theorem \ref{thm:Hamilton_Poisson} is much weaker than the last two conditions in
Theorem  \ref{thm:Hamilton_Symplectic}: in the Poisson case we allow Type II and Type IV singular points (the set of such
points is a disjoint union of closed simple curves on $\Sigma$), and $\Sigma$ is not required to be orientable globally:
it can be non-orientable in the neighborhood of a closed curve of singular points of Type II and Type IV. See
Figure \ref{fig:Poisson_nonorientable} for an example.
\end{remark}

\begin{figure}[htb] 
\begin{center}
\includegraphics[width=80mm]{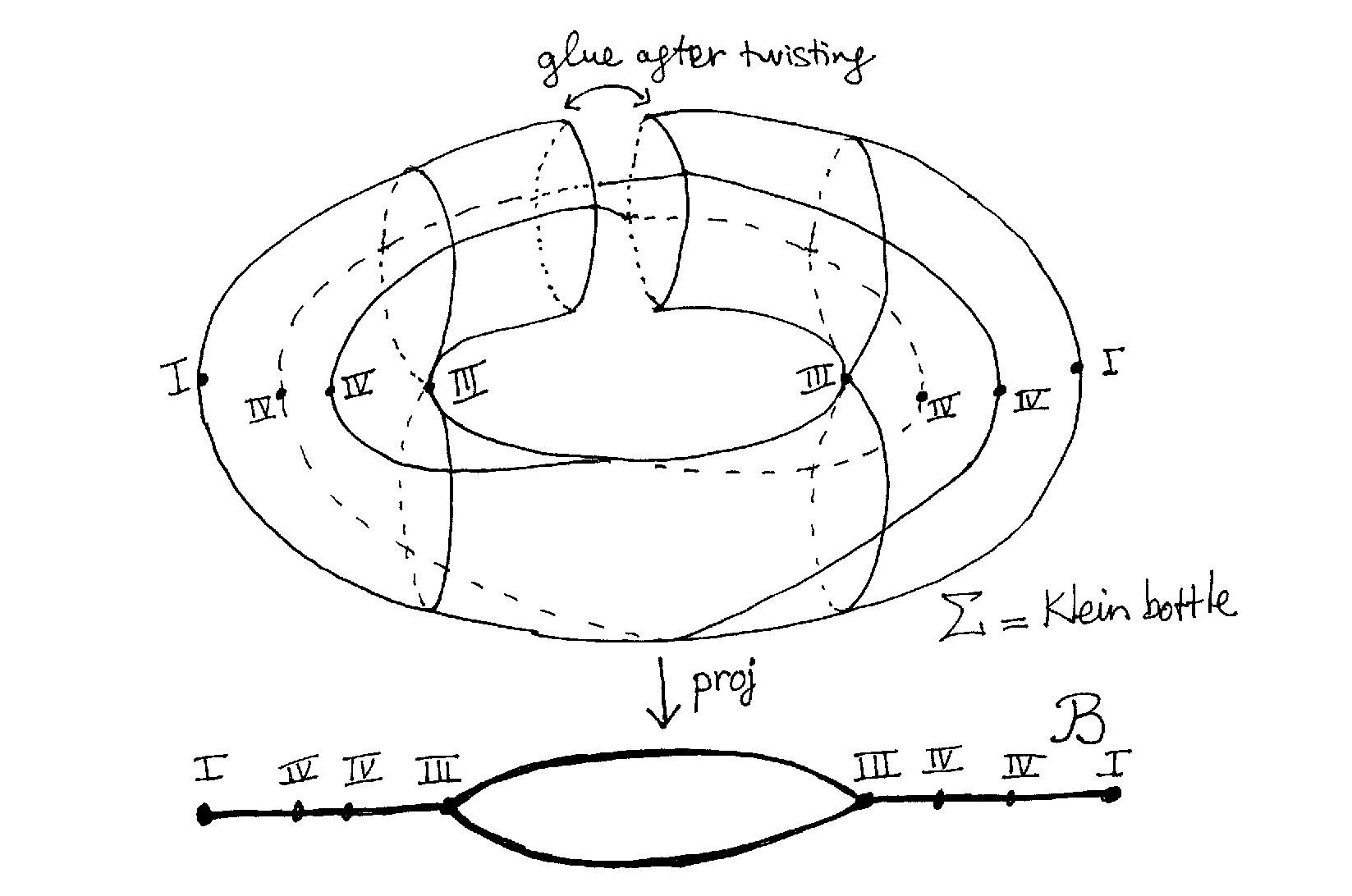}
\caption{A Hamiltonian system on an non-orientable Poisson surface}
\label{fig:Poisson_nonorientable}
\end{center}
\end{figure}

\begin{proof}
The proof is similar to the proof of Theorem \ref{thm:Hamilton_Symplectic}. We still have a global Morse first integral.
We can still construct Poisson structures locally, and then sum them up together by a partition of unity on $\Sigma$.
Near a Type II singular point where 
\begin{equation}
X = \gamma(y) x \frac{\partial}{\partial x} \ \text{and} \ H = h(y)
\end{equation}
we can choose the Poisson structure
\begin{equation}
\Pi = \frac{\gamma(y)}{h'(y)} \frac{\partial}{\partial x} \wedge \frac{\partial}{\partial y}.
\end{equation}
Near a Type IV singular point where
\begin{equation}
X = (y + G(x,y)) \frac{\partial}{\partial x} \ \text{and} \ H = h(y)
\end{equation}
we can choose the Poisson structure
\begin{equation}
\Pi = \frac{y + G(x,y)}{h'(y)} \frac{\partial}{\partial x} \wedge \frac{\partial}{\partial y}.
\end{equation}
The rest of the proof is straightforward. 
\end{proof}

\begin{remark}
 The Poisson structure in Theorem \ref{thm:Hamilton_Poisson} vanishes on the set of singular points of Type II and Type IV.
This set is a disjoint union of regular simple closed curves, and the Poisson structure is locally isomorphic to 
$x\frac{\partial}{\partial x} \wedge \frac{\partial}{\partial y}$ near every of these singular points. 
Such Poisson structures are stable under perturbations 
(see, e.g., \cite{DufourZung-Poisson2005,Radko-Poisson2002}), and generic Hamiltonian systems on them will admit singularities
of Type II and Type IV. This is one more good reason to include 
nilpotent Type IV singularities in our definition of  weaky-nondegenerate integrable systems on surfaces.
\end{remark}

\vspace{0.5cm}

\end{document}